\renewcommand{\ALG@name}{Algorithm ADAPT.}
\renewcommand{\fnum@algorithm}{\fname@algorithm}
\def\RR{\rm \hbox{I\kern-.2em\hbox{R}}}
\def\NN{\rm \hbox{I\kern-.2em\hbox{N}}}
\def\ZZ{\rm {{\rm Z}\kern-.28em{\rm Z}}}
\def\CC{\rm \hbox{C\kern -.5em {\raise .32ex \hbox{$\scriptscriptstyle
|$}}\kern
-.22em{\raise .6ex \hbox{$\scriptscriptstyle |$}}\kern .4em}}
\def\<{\langle}
\def\>{\rangle}
\def\Chi{\raise .3ex
\hbox{\large $\chi$}} 
\def\lsima{\hbox{\kern -.6em\raisebox{-1ex}{$~\stackrel{\textstyle<}{\sim}~$}}\kern -.4em}
\def\lsim{\hbox{\kern -.2em\raisebox{-1ex}{$~\stackrel{\textstyle<}{\sim}~$}}\kern -.2em}
\def\[{\Bigl [}
\def\]{\Bigr ]}
\def\({\Bigl (}
\def\){\Bigr )}
\def\[{\Bigl [}
\def\]{\Bigr ]}
\def\({\Bigl (}
\def\){\Bigr )}
\newcommand{\be}{\begin{equation}}
\newcommand{\ee}{\end{equation}}
\newcommand{\beu}{\begin{equation*}}
\newcommand{\eeu}{\end{equation*}}
\newcommand{\bea}{$$ \begin{array}{lll}}
\newcommand{\eea}{\end{array} $$}
\newcommand{\bi}{\begin{itemize}}
\newcommand{\ei}{\end{itemize}}
\newtheorem{theorem}{Theorem}
\newtheorem{remark}{Remark}
\newtheorem{lemma}{Lemma}
\newtheorem{corollary}{Corollary}
\newcommand{\ms}{H}
\newcommand{\evquad}{\mathcal{Z}_{\ortopar}}
\newcommand{\evrank}{\mathcal{W}_\dom}
\newcommand{\evdub}{\mathcal{A}_{\ortopar,\dom}}
\newcommand{\evtrib}{\mathcal{I}_{\delta,\widetilde{\delta},\ortopar,\dom}}
\newcommand{\evN}{\mathcal{N}_{\widetilde{\delta}}}
\newcommand{\evB}{\mathcal{B}_{\delta,\ortopar}}
\newcommand{\evC}{\mathcal{C}_{\delta}}
\newcommand{\evD}{\mathcal{D}_{\ortopar}}
\newcommand{\alldom}{B}
\newcommand{\dom}{\Omega}
\newcommand{\auxbas}{\varphi}
\newcommand{\bou}{\eta}
\newcommand{\fun}{u}
\newcommand{\multipli}{\rho}
\newcommand{\setcross}{\dom_{CV}}
\newcommand{\costi}{\theta}
\newcommand{\ortopar}{\varepsilon}
\DeclareMathOperator*{\argmin}{argmin}
\newcommand{\conf}{\beta}
\begin{document}
\bibliographystyle{plain}
\title{Multivariate approximation of functions on irregular domains\\
by weighted least-squares methods 
%\footnote{giovanni.migliorati@gmail.com}
}
\author{Giovanni Migliorati\thanks{Sorbonne Universit\'e, UPMC Univ Paris 06, CNRS, UMR 7598, Laboratoire Jacques-Louis Lions, 4, place Jussieu 75005, Paris, France. email: migliorati@ljll.math.upmc.fr}}
%
%\date{July 26, 2019}
%
\date{\today}
\maketitle
\begin{abstract}
\noindent 
We propose and analyse numerical algorithms based on weighted least squares for the approximation of a bounded real-valued function on a general bounded domain $\dom \subset \mathbb{R}^d$. Given any $n$-dimensional approximation space $V_n \subset L^2(\dom)$, the analysis in \cite{CM2016} shows the existence of stable and optimally converging weighted least-squares estimators, using a number of function evaluations $m$ of the order $n \ln n$. When an $L^2(\dom)$-orthonormal basis of $V_n$ is available in analytic form, such estimators can be constructed using the algorithms described in \cite[Section 5]{CM2016}. If the basis also has product form, then these algorithms have computational complexity linear in $d$ and $m$. In this paper we show that, when $\dom$ is an irregular domain such that the analytic form of an $L^2(\dom)$-orthonormal basis is not available, stable and quasi-optimally weighted least-squares estimators can still be constructed from $V_n$, again with $m$ of the order $n \ln n$, but using a suitable surrogate basis of $V_n$ orthonormal in a discrete sense. The computational cost for the calculation of the surrogate basis depends on the Christoffel function of $\dom$ and $V_n$. Numerical results validating our analysis are presented. 
\end{abstract}
\section{Introduction and overview of the paper}
Approximating an unknown function from its pointwise evaluations is a classical problem in mathematics.
\emph{Interpolation} and \emph{least squares} are two approaches to such a problem, see \emph{e.g.}~\cite{D,GKKW}. 
In this paper, we develop and analyse numerical methods based on least squares for the approximation of a bounded function $\fun:\dom\to \mathbb{R}$ on a general bounded domain $\dom \subset \mathbb{R}^{d}$ in any dimension $d$, that can be a challenging task due to the \emph{curse of dimensionality}. 
Approximation takes place in $L^2(\dom,\mu)$, the space of square-integrable functions with respect to $\mu:=\mu(\dom)$, the uniform probability measure on $\dom$. 
Given a finite $n$-dimensional linear space $V_n\subset L^2(\dom,\mu)$, projection-type numerical methods select $u_n \in V_n$ that minimizes the approximation error of $\fun$ in $V_n$. 
Standard least squares are an example of such numerical methods, that construct $\fun_n$ from pointwise evaluations of $\fun$ at $m>n$ iid random samples from $\mu$.  
An important point in the analysis of least squares concerns how large $m$ has to be, compared to $n$, to ensure stability and good approximation properties of the estimator $\fun_n$.   

Recent works \cite{DH,JNZ,CM2016} have pointed out weighted least-squares methods as a well-promising approach for approximation in arbitrary dimension $d$.
In any domain $\dom\subseteq \mathbb{R}^d$ and with any finite-dimensional space $V_n\subset L^2(\dom,\mu)$, it was shown in \cite{CM2016} that weighted least-squares estimators $u_n \in V_n$ are stable and optimally converging in expectation, when the $m$ evaluations of $\fun$ are taken at iid random samples from a suitable probability measure $\sigma_n=\sigma_n(\dom)$ that depends on $V_n$ and $\mu$, and with $m$ being only linearly proportional to $n$ up to a logarithmic term, and independent of the ambient dimension $d$. 
This result is recalled in Theorem~\ref{thm:mio}.

For the computation of $u_n$ with the above guarantees, the analytic expression of an $L^2(\dom,\mu)$-orthonormal basis $(L_j)_{j\geq 1}$ is needed. 
If this is available, then one can generate the random samples from $\sigma_n$ and construct $\fun_n$ as described in \cite{CM2016}, and as recalled in Section~\ref{sec:analysis} as well. 
Moreover, if the orthonormal basis has product form, like \emph{e.g.}~when $\dom$ is a product domain, then the numerical methods developed in \cite{CM2016} generate random samples from $\sigma_n$ at a computational cost that scales linearly in both $d$ and $m$.  

In general, when $\dom$ is an irregular domain, the analytic expression of an $L^2(\dom,\mu)$-orthonormal basis is not known. Hence a suitable surrogate basis $\widetilde{L}_1,\ldots,\widetilde{L}_n$ of $V_n$ is needed, that replaces $L_1,\ldots,L_n$ and at the same time retains orthogonality with respect to some scalar product easy to evaluate on any domain $\dom$. 
In this setting, a convenient choice is to orthonormalize $\widetilde{L}_1,\ldots,\widetilde{L}_n$ using a discrete scalar product with iid random samples from $\mu$. 
We denote by $\widetilde{\fun}_n \in V_n$ the new weighted least-squares estimator of $\fun$ computed using the basis $\widetilde{L}_1,\ldots,\widetilde{L}_n$, that differs from $\fun_n$ whose computation uses $L_1,\ldots,L_n$. 
In the present paper we show that the estimator $\widetilde{\fun}_n$ can be constructed on general domains $\dom$ and that: 
\begin{itemize}
\item $\widetilde{\fun}_n$  is stable with high probability, quasi-optimally converging in expectation, and uses a number $m$ of evaluations of $\fun$ only linearly proportional to $n$ up to a logarithmic term, and independent of $d$;  
\item the numerical construction of $\widetilde{L}_1,\ldots,\widetilde{L}_n$ requires $\widetilde{m}$ iid random samples from $\mu$ and the QR factorisation of $\widetilde{m}$-by-$n$ matrices, where $\widetilde{m}$ scales as the $L^\infty(\dom)$ norm of the reciprocal of the Christoffel function of $V_n$ on $\dom$. Such a construction does not use any evaluation of $\fun$, nor does it require the knowledge of $L_1,\ldots,L_n$. 
\end{itemize}

The novel stability and convergence result for the estimator $\widetilde{\fun}_n$ are stated in Theorem~\ref{main_theo}, whose proof uses previous results from \cite{CDL,CM2016} and matrix Bernstein inequality.  
The convergence estimate reads as follows, where we use the $L^2(\dom,\mu)$ and $L^\infty(\dom)$ best approximation errors of $\fun$ in $V_n$, a parameter $\ortopar\geq 0$ related to the construction of $\widetilde{L}_1,\ldots,\widetilde{L}_n$, two unnamed constants $C_2,C_\infty>0$, and omit the technical details on the truncation of the estimator: 
\begin{equation}
\label{eq:simplif_res}
\mathbb{E}( \| \fun - \widetilde{\fun}_n  \|^2_{L^2(\dom,\mu)}) \leq \left(1 + \dfrac{  C_2 (1+\ortopar n) n }{m} \right)\inf_{v \in V_n} \| \fun - v \|^2_{L^2(\dom,\mu)}+\dfrac{ C_\infty (1+\ortopar n) n }{ \widetilde{m}} \inf_{v \in V_n} \| \fun - v \|^2_{L^\infty(\dom)} + \textrm{trunc}. 
\end{equation}
The parameters $m$ and $\widetilde{m}$ essentially scale linearly and superlinearly in $n$, respectively.

The term $\ortopar n$ arises from the missing discrete orthogonality of $\widetilde{L}_1,\ldots,\widetilde{L}_n$, that occurs in any orthonormalisation process due to numerical cancellation. When $\widetilde{L}_1,\ldots,\widetilde{L}_n$ are constructed by Householder QR factorisation, $\ortopar$ provably does not exceed $\epsilon_M \widetilde{m} n^{3/2}$ where $\epsilon_M\approx 10^{-16}$ is the machine precision of arithmetic calculations, and thus the term $\ortopar n$ is completely negligible for wide ranges of $n$ and $\widetilde{m}$. 

The construction of the estimator $\widetilde{\fun}_n$ uses $m$ evaluations of $\fun$ at iid random samples drawn from a surrogate discrete probability measure $\widetilde{\sigma}_n=\widetilde{\sigma}_n(\dom)$ that emulates $\sigma_n$, and that depends on $\widetilde{L}_1,\ldots,\widetilde{L}_n$ and $\mu$. 
The random samples from $\widetilde{\sigma}_n$ 
can be generated by subsampling the QR factorisation of a suitable matrix that depends on $\widetilde{L}_1,\ldots,\widetilde{L}_n$.
Similar applications of QR factorisation 
have been used for the computation of Fekete points \cite{AS}, and for the construction of randomised quadratures \cite{NZX}.    

In \cite{AH} a different method based on SVD truncation has been analysed, for the same purposes of function approximation on irregular domains. 
For that method, similar error estimates as \eqref{eq:simplif_res} have been obtained in \cite{AH}, but requiring a number $m$ of function evaluations that scales 
superlinearly 
in $n$, and using a different best approximation error that depends on the SVD truncation parameter. 

In \cite{AC}  
a method similar to 
our forthcoming 
Algorithm 1 
has been proposed, 
and its convergence 
in probability
has been analysed   
assuming exact discrete orthonormality of the surrogate basis, \emph{i.e.}~assuming $\varepsilon=0$.

The structure of our paper is the following: in Section~\ref{sec:analysis} we recall from \cite{CM2016} some results on approximation by weighted least-squares methods, and describe the additional challenges encountered when applying such methods to irregular domains. 
In Section~\ref{sec:main_res} we state Theorem~\ref{main_theo}. 
Its proof is postponed to Section~\ref{sec:intermediate}. 
In Section~\ref{sec:const_basis} we describe the construction of $\widetilde{L}_1,\ldots,\widetilde{L}_n$. 
In Section~\ref{sec:numerical_section} we propose two algorithms 
(Algorithm 1 and Algorithm 2) that compute the weighted least-squares estimator $\widetilde{u}_n$.
Section~\ref{sec:numerics} contains some numerical tests that validate our analysis. 
In Section~\ref{sec:conclu} we draw some conclusions. 

\section{Weighted least-squares approximation on irregular domains}
\label{sec:analysis}
\noindent
Given a bounded domain $\dom \subset \mathbb{R}^d$, we consider the problem of approximating a bounded function $\fun:\dom \to \mathbb{R}$ 
from its pointwise evaluations at independent random samples uniformly distributed over $\dom$. 
Without loss of generality we suppose that $\dom \subseteq \alldom :=[-1,1]^d$.  
Denote with $\mu=\mu(\dom)$ the uniform probability measure on $\dom$, and with $(L_j)_{j\geq 1}$ an orthonormal basis of $L^2(\dom,\mu)$, where the $L^2(\dom,\mu)$ norm is denoted as $\| u \|:=\sqrt{\langle u,u \rangle}$ and $\langle u,v \rangle:=\int_\dom uv \, d\mu$ for any $u,v \in L^2(\dom,\mu)$.
The Euclidean norm in $\mathbb{R}^n$ is indicated with $\ell^2$.  
For any $n\geq 1$, we denote by $V_n:=\textrm{span}(L_1,\ldots,L_n) \subset L^2(\dom,\mu)$ an $n$-dimensional approximation space, and assume that $V_n$ contains the constant functions. 
We define the $L^2(\dom,\mu)$-projection of $\fun$ on $V_n$ as 
\begin{equation}
P_n \fun:=\argmin_{v \in V_n}  \| \fun - v \|, 
\label{eq:def_ex_est}
\end{equation}
and denote by $e_n(\fun):=\| \fun - P_n \fun \|$ the $L^2(\dom,\mu)$ best approximation error of $\fun$ in $V_n$.  
We also denote by $e_n^\infty(\fun):=\inf_{v \in V_n} \| \fun - v  \|_{L^\infty(\dom)}$ the best approximation error in $L^\infty$. 
Using $L_1,\ldots,L_n$, we define the functions 
$$
k_n(y):=\sum_{j=1}^n | L_j(y)|^2, \quad w(y):=\dfrac{n}{k_n(y)}, \quad y \in \dom, 
$$
and the probability measure $\sigma_n$ on $\dom$ as
\begin{equation}
d\sigma_n:= w^{-1} d\mu = n^{-1} \sum_{j=1}^n L_j^2 d\mu.   
\label{eq:std_mes_opt}
\end{equation}
When $V_n$ is the total degree polynomial space, $k_n^{-1}$ is known as the Christoffel function, see \emph{e.g.}~\cite{F1986}. 
For any choice $y^1,\ldots,y^m \in \dom$ of $m$ points, we introduce the scalar product  
\begin{equation}
\langle u,v\rangle_m:= \frac1m \sum_{i=1}^m w(y^i) u(y^i) v(y^i), \qquad u,v \in L^2(\dom,\mu).   
\label{eq:scal_prod_first}
\end{equation}

In general the exact projection \eqref{eq:def_ex_est} cannot be computed. This motivates the interest in the discrete least-squares approach, where the $L^2(\dom,\mu)$ norm in \eqref{eq:def_ex_est} is replaced by the seminorm induced on $L^2(\dom,\mu)$ by the scalar product \eqref{eq:scal_prod_first}.  
Define the weighted least-squares estimator 
\begin{equation}
\fun_W^*:=\sum_{j=1}^n a_j L_j=\argmin_{v \in V_n}  \| \fun - v \|_{m},  
\label{eq:def_wls_est_ex}
\end{equation}
that can be computed from the minimal $\ell^2$-norm solution $a=(a_1,\ldots,a_n)^\top \in \mathbb{R}^n$ to the normal equations
\begin{equation}
G a = b, 
\label{eq:def_norm_eq_ex}
\end{equation}
where the Grammian matrix $G\in \mathbb{R}^{n \times n}$ is defined component-wise as $G_{jk}:=\langle L_j,L_k\rangle_m$ and the right-hand side $b\in \mathbb{R}^n$ has components $b_j=\langle L_j, \fun \rangle_m$. 
Throughout the paper, $I\in \mathbb{R}^{n\times n}$ denotes the identity matrix, and 
$$
\| A \| := \max_{ \| x \|_{\ell^2} =1} \| A x \|_{\ell^2}, 
\qquad 
\kappa(A):=\sqrt{\dfrac{ \lambda_{\max}(A^\top A) }{ \lambda_{\min}(A^\top A) } },
$$
denotes the spectral norm, respectively the condition number, of any matrix $A\in \mathbb{R}^{m\times n}$. 
For any $\delta \geq 0 $ define $\xi(\delta):=(1+\delta) \ln (1+\delta) - \delta >0$, that can be sandwiched as $(2 \ln(2)-1) \delta^2 \leq \xi(\delta)\leq \delta^2/2$ when $\delta \in [0,1]$.  
As in \cite{CM2016}, we suppose that $\fun\in L^2(\dom,\mu)$ satisfies a uniform bound with some known $\bou>0$: 
\begin{equation}
\| \fun \|_{L^\infty(\dom)} \leq \bou.
\label{eq:unif_bound}
\end{equation}
We then introduce the truncation operator $z \mapsto T_\bou(z) := \textrm{sign}(z) \min\{ |z|,\bou\}$, and define the truncated estimator 
$$
\fun_T^*:= T_\bou \circ \fun_W^*. 
$$

The following result was proven in \cite[Theorem 2.1 and Corollary 2.2]{CM2016}, in a slightly different form (here we rewrite it with $\alpha=2m^{-r}$, where $r>0$ is the same parameter as in \cite{CM2016}).      
\begin{theorem}
\label{thm:mio}
In any dimension $d$, for any domain $\dom\subseteq \mathbb{R}^d$ and any $\alpha,\delta \in (0,1)$, if 
$$
m \geq  \dfrac{n}{\xi(\delta)}  \ln\left( \dfrac{ 2n }{ \alpha} \right)
$$
and $y^1,\ldots,y^m \in \dom$ are $m$ iid random samples from $\sigma_n$ then 
$$
\Pr( \| G - I \| > \delta) \leq \alpha,  
$$ 
and if $\fun \in L^2(\dom,\mu)$ satisfies \eqref{eq:unif_bound} then the estimator $\fun_T^*$ satisfies  
$$
\mathbb{E}( \| \fun - \fun_T^*   \|^2 ) \leq \left( 1+ \dfrac{4 }{\xi(\delta) \ln(2n / \alpha) } \right) e_n(\fun)^2 +  4 \alpha  \bou^2.   
$$
\end{theorem}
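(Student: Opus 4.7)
The statement splits naturally into two parts: the concentration of $G$ around the identity, for which I would invoke a matrix Chernoff argument, and the mean-square error bound on $\fun_T^*$, which I would derive by conditioning on the ``stability'' event $E := \{\|G - I\| \leq \delta\}$ and handling its complement via the deterministic $L^\infty$ truncation.

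For the probability bound, I would express $G = \sum_{i=1}^m X_i$ with $X_i := m^{-1} w(y^i) L(y^i) L(y^i)^\top$, where $L(y) := (L_1(y),\ldots,L_n(y))^\top \in \mathbb{R}^n$. Each $X_i$ is a random rank-one positive semidefinite matrix, and since $y^i \sim \sigma_n$ with density $w^{-1}$ relative to $\mu$, the orthonormality of $(L_j)$ gives $\mathbb{E}[X_i] = m^{-1} I$, hence $\mathbb{E}[G] = I$. The identity $w(y) k_n(y) = n$ yields the uniform bound $\|X_i\| = m^{-1} w(y^i) k_n(y^i) = n/m$ almost surely. Tropp's matrix Chernoff inequality then produces $\Pr(\|G - I\| > \delta) \leq 2n\exp(-m\xi(\delta)/n)$, and requiring the right-hand side to be at most $\alpha$ yields precisely the stated lower bound on $m$.

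For the error estimate, on $E$ the eigenvalues of $G$ lie in $[1-\delta, 1+\delta]$, which, writing any $v \in V_n$ as $v = \sum c_j L_j$, gives the norm equivalence $(1-\delta)\|v\|^2 \leq \|v\|_m^2 \leq (1+\delta)\|v\|^2$. Decomposing $\fun - \fun_W^* = (\fun - P_n \fun) + (P_n \fun - \fun_W^*)$ with only the second summand in $V_n$, combining the norm equivalence with the minimality $\|\fun - \fun_W^*\|_m \leq \|\fun - P_n \fun\|_m$, and taking expectation under the identity $\mathbb{E}[\|\fun - P_n \fun\|_m^2] = \|\fun - P_n \fun\|^2 = e_n(\fun)^2$ (a direct consequence of $d\sigma_n = w^{-1} d\mu$), one controls $\mathbb{E}[\mathbf{1}_E \|\fun - \fun_W^*\|^2]$ by a multiple of $e_n(\fun)^2$. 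Because $T_\eta$ is $1$-Lipschitz and $|\fun| \leq \eta$ so that $T_\eta(\fun) = \fun$, the pointwise bound $|\fun(y) - \fun_T^*(y)| \leq |\fun(y) - \fun_W^*(y)|$ holds, hence $\|\fun - \fun_T^*\|^2 \leq \|\fun - \fun_W^*\|^2$ on $E$. Off $E$, the crude estimate $\|\fun - \fun_T^*\|_{L^\infty(\dom)} \leq 2\eta$ yields $\mathbb{E}[\mathbf{1}_{E^c}\|\fun - \fun_T^*\|^2] \leq 4\eta^2 \alpha$.

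The main delicacy is recovering the sharp prefactor $1 + 4/(\xi(\delta)\ln(2n/\alpha))$ rather than a looser $1/(1-\delta)$-type constant: this requires keeping $m$ symbolic throughout the conditional argument and substituting the sample-size hypothesis $m\xi(\delta) \geq n\ln(2n/\alpha)$ only at the final step, so that the leading deviation term is absorbed into a coefficient that decays logarithmically in $2n/\alpha$. Using a generic stability-plus-best-approximation bound earlier would lose this logarithmic improvement and produce a qualitatively weaker estimate.
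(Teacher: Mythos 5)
First, note that the paper does not prove Theorem~\ref{thm:mio} itself: it is quoted from \cite[Theorem 2.1 and Corollary 2.2]{CM2016}, and the closest thing to a proof in this paper is the argument for Theorem~\ref{main_theo} in Section~\ref{sec:intermediate}, which adapts the same strategy to the surrogate basis. Your first part is correct and is exactly the argument of the cited source: the rank-one matrices $X_i = m^{-1}w(y^i)L(y^i)L(y^i)^\top$ have $\mathbb{E}[X_i]=m^{-1}I$ and $\|X_i\|=n/m$ almost surely, and the matrix Chernoff bound gives $2n\exp(-m\xi(\delta)/n)\leq\alpha$ under the stated condition on $m$. Your treatment of the complement event via truncation ($\|\fun-\fun_T^*\|\leq 2\bou$, contributing $4\bou^2\alpha$) is also right.

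The gap is in the error estimate on the stability event $E$. The mechanism you describe --- norm equivalence on $V_n$ plus the minimality $\|\fun-\fun_W^*\|_m\leq\|\fun-P_n\fun\|_m$ --- is the generic quasi-optimality argument, and it can only deliver $\mathbb{E}[\mathbf{1}_E\|\fun-\fun_W^*\|^2]\leq C(\delta)\,e_n(\fun)^2$ with a constant $C(\delta)>1$ bounded away from $1$ (something like $1+4/(1-\delta)$ after the triangle inequality). No amount of deferring the substitution of the sample-size hypothesis turns this into the prefactor $1+\tfrac{4}{\xi(\delta)\ln(2n/\alpha)}$, which tends to $1$ as $n\to\infty$; your closing paragraph correctly identifies that the generic bound is too weak, but the proof you wrote \emph{is} that generic bound, and ``keeping $m$ symbolic'' is not the missing ingredient. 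What is actually needed (and what \cite{CM2016} does, mirrored in \eqref{eq:bound_trunc_proj_pit}--\eqref{eq:term_disc_proj} of this paper) is the exact Pythagorean splitting $\|\fun-\fun_W^*\|^2=\|g\|^2+\|P_n^m g\|^2$ with $g:=\fun-P_n\fun\perp V_n$ (using $P_n^m P_n\fun=P_n\fun$), followed by the normal equations $Ga=h$ with $h_k=\langle g,L_k\rangle_m$, the spectral bound $\|a\|_{\ell^2}^2\leq(1-\delta)^{-2}\|h\|_{\ell^2}^2$ on $E$, and the second-moment computation
\begin{equation*}
\mathbb{E}\Bigl(\sum_{k=1}^n|\langle g,L_k\rangle_m|^2\Bigr)\leq\frac{1}{m}\int_\dom w\,g^2\sum_{k=1}^n L_k^2\,d\mu=\frac{n}{m}\|g\|^2,
\end{equation*}
which hinges on the two facts that $\mathbb{E}[w(y)g(y)L_k(y)]=\langle g,L_k\rangle=0$ (orthogonality of the residual to $V_n$, killing the off-diagonal terms) and $w\,k_n\equiv n$. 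It is this $n/m$ factor, combined with $n/m\leq\xi(\delta)/\ln(2n/\alpha)$, that produces the vanishing correction to the leading coefficient. Without this step your argument proves a qualitatively weaker statement than the one claimed.
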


The above result holds with any bounded or unbounded domain $\dom$ in any dimension, and in general approximation spaces $V_n$. In practice, the computation of the estimator $\fun_T^*$ requires the analytic expression of an $L^2(\dom,\mu)$-orthonormal basis $L_1,\ldots,L_n$, for the generation of the random samples from \eqref{eq:std_mes_opt} and for the construction of $G$ and $b$ in \eqref{eq:def_norm_eq_ex}. 
When $\dom=[-1,1]^d$, many $L^2(\dom,\mu)$-orthonormal basis can be constructed by tensorization, \emph{e.g.}~tensorized Legendre polynomials or tensorized wavelets. 
Other examples are available when $\dom$ has a symmetric structure, \emph{e.g.}~spherical harmonics on 
the sphere 
$\dom=\{ y\in \mathbb{R}^d \, : \, \| y  \|_{\ell^2}=1 \}$.

In general, when $\dom$ is an irregular domain, the analytic expression of the $L_j$ is not known.
This introduces additional challenges in the development and analysis of projection-type numerical methods for approximation on irregular domains.  
In principle, candidate replacements of $L_1,\ldots,L_n$ are functions $\widetilde{L}_1,\ldots,\widetilde{L}_n \in V_n$ not necessarily orthonormal in $L^2(\dom,\mu)$ that satisfy the following prescriptions: 
\begin{enumerate}
\item[\textbf{P1)}] $\widetilde{L}_1,\ldots,\widetilde{L}_n$ be orthonormal w.r.t.~a discrete scalar product, that can be easily evaluated with any domain $\dom$, in contrast to the $L^2(\dom,\mu)$ scalar product that requires integration over $\dom$;   
\item[\textbf{P2)}] $\textrm{span}(\widetilde{L}_1,\ldots,\widetilde{L}_n) = V_n$, since our goal is the approximation of $\fun$ in the space $V_n$.
\end{enumerate}

We now introduce some tools useful for the numerical construction of the basis $\widetilde{L}_1,\ldots,\widetilde{L}_n$. 
Let $\widetilde{y}^1,\ldots,\widetilde{y}^{\widetilde{m}} \in \dom$ be $\widetilde{m}$ iid random samples from $\mu$, and define the discrete scalar product  
\begin{equation}
\langle u,v \rangle_{\widetilde{m}}:= \frac{1}{\widetilde{m}} \sum_{i=1}^{\widetilde{m}} u(\widetilde{y}^i) v(\widetilde{y}^i), \qquad u,v \in L^2(\dom,\mu),     
\label{eq:disc_scal_prod}
\end{equation}
and $\| u \|_{\widetilde{m}}:=\sqrt{ \langle u,u \rangle_{\widetilde{m}}  }$. 
For any $\ortopar \geq0$, we say that $\widetilde{L}_1,\ldots,\widetilde{L}_n $ are $\ortopar$-orthonormal if 
\begin{equation}
\label{def_epsorto}
\sum_{j,k=1}^n |\langle \widetilde{L}_j , \widetilde{L}_k \rangle_{\widetilde{m}}-\delta_{jk}|^2 \leq \ortopar^2. 
\end{equation}
Orthonormalisation algorithms, \emph{e.g.}~Gram Schmidt-type or factorization-type algorithms, try to construct a set $\widetilde{L}_1,\ldots, \widetilde{L}_n \in V_n$ of functions orthonormal w.r.t~\eqref{eq:disc_scal_prod}, but they suffer from loss of orthogonality due to numerical cancellation.  
As a consequence, the $\widetilde{L}_1,\ldots, \widetilde{L}_n$ constructed by any such numerical method are only $\ortopar$-orthonormal for some $\ortopar > 0$, and prescription P1 can be fulfilled with the scalar product \eqref{eq:disc_scal_prod} up to a (hopefully small) loss of orthogonality quantified by \eqref{def_epsorto}.

Let us consider prescription P2. Define $\widetilde{V}_n:=\textrm{span}(\widetilde{L}_1,\ldots,\widetilde{L}_n)$ and denote with $\auxbas_1,\ldots,\auxbas_n \in V_n$ a collection of $n$ functions such that $\textrm{span}(\auxbas_1,\ldots,\auxbas_n)=V_n$.
These functions need not be orthonormal to a scalar product, but only linearly independent.  
Orthonormalisation algorithms construct each $\widetilde{L}_1,\ldots,\widetilde{L}_n$ as linear combinations of $\auxbas_1,\ldots,\auxbas_n$, ensuring $\widetilde{V}_n \subseteq V_n$.  
The coefficients of the linear combinations are computed from evaluations of $\auxbas_1,\ldots,\auxbas_n$ at $\widetilde{y}^1,\ldots,\widetilde{y}^{\widetilde{m}}$. 
Although linearly independent, the $\auxbas_i$ and $\auxbas_j$ with $i\neq j$ could be indistinguishable when evaluated at $\widetilde{y}^1,\ldots,\widetilde{y}^{\widetilde{m}}$, and when this happens the $\widetilde{L}_1,\ldots,\widetilde{L}_n$ do not span the whole $V_n$.  
Due to randomness in the $\widetilde{y}^1,\ldots,\widetilde{y}^{\widetilde{m}}$, in general spaces $V_n$ one can ensure P2 only with large probability. 
When $V_n$ is a polynomial space, in Section~\ref{sec:const_basis} we show that P2 can be ensured with probability one. 

For the time being we suppose that an $\ortopar$-orthonormal basis $\widetilde{L}_1,\ldots, \widetilde{L}_n$ is available for some $\ortopar>0$.  
A concrete algorithm for the construction of such a basis is described in Section~\ref{sec:const_basis}, together with suitable bounds for $\ortopar$.
Using $\widetilde{L}_1,\ldots,\widetilde{L}_n$, define the functions  
\begin{equation}
\widetilde{k}_n(y):=\sum_{j=1}^n | \widetilde{L}_j(y)|^2, \quad w(y):=\dfrac{\gamma}{\widetilde{k}_n(y)}, \quad y \in \dom, 
\label{eq:weight_other} 
\end{equation}
where $\gamma>0$ is a normalisation term defined later. 
Consider the set $\widetilde{\dom}:=\{\widetilde{y}^1,\ldots,\widetilde{y}^{\widetilde{m}}   \} \subset \dom$ containing $\widetilde{m}$ iid random samples from $\mu$, and define the discrete uniform probability measure $\widetilde{\mu}$ on $\widetilde{\dom}$ (\emph{i.e.}~$\widetilde{\mu}(\widetilde{y}^i)=\widetilde{m}^{-1}$ for all $i=1,\ldots,\widetilde{m}$) and the probability measure $\widetilde{\sigma}_n$ on $\widetilde{\dom}$ as  
\begin{equation}
d\widetilde{\sigma}_n:= w^{-1} d\widetilde{\mu} = \dfrac{1}{\gamma} \sum_{j=1}^n \widetilde{L}_j^2 d\widetilde{\mu},  
\label{eq:alt_mes_opt}
\end{equation}
with $\gamma:=\widetilde{m}^{-1} \sum_{i=1}^{\widetilde{m}} \sum_{j=1}^n (\widetilde{L}_j(\widetilde{y}^i))^2=\sum_{j=1}^n \langle \widetilde{L}_j, \widetilde{L}_j  \rangle_{\widetilde{m}}$.   

Let $y^1,\ldots,y^m \in \dom$ be $m$ iid random samples from $\widetilde{\sigma}_n$.   
Using these random samples and the scalar product \eqref{eq:scal_prod_first} with the weight $w$ chosen as in \eqref{eq:weight_other}, we define the Grammian matrix $\widetilde{G}\in\mathbb{R}^{n\times n}$ with components $\widetilde{G}_{jk}:=\langle \widetilde{L}_j,\widetilde{L}_k\rangle_{m}$, and the vector $\widetilde{b}\in \mathbb{R}^n$ with components $\widetilde{b}_j=\langle \widetilde{L}_j, \fun \rangle_m$. 
We now introduce the discrete projection $P_n^m$ on $\widetilde{V}_n$ and the weighted least-squares estimator $\fun_W$ as  
\begin{equation}
\fun_W:=P_n^m \fun :=\argmin_{v \in \widetilde{V}_n}  \| \fun - v \|_{m}.  
\label{eq:def_wls_est}
\end{equation}
The estimator $\fun_W$ can be computed by solving the normal equations
\begin{equation}
\widetilde{G} a = \widetilde{b},  
\label{eq:def_norm_eq}
\end{equation}
whose solution $a=(a_1,\ldots,a_n)^\top \in \mathbb{R}^n$ provides the coefficients of the expansion $\fun_W=\sum_{j=1}^n a_j \widetilde{L}_j$.   
Denote with $\fun_T$ the truncated estimator
$$
\fun_T:= T_\bou \circ \fun_W. 
$$

Define $\dom_{m} :=\overbrace{\dom\times \cdots \times \dom}^{ m \textrm{ times}}$. 
Throughout the paper, all the probability events belong to the Borel $\sigma$-algebra $\mathcal{B}(\dom_{m+\widetilde{m}})$ and $\Pr$ denotes the probability measure $(\otimes^{m} d\widetilde{\sigma}_n) \otimes
(\otimes^{\widetilde{m}} d\mu )$ on $\dom_{m+\widetilde{m}}$. 
The only exceptions are in Theorem~\ref{thm:mio} that uses $\mathcal{B}(\dom_{m})$ and $\Pr$ as $\otimes^{m} d\sigma_n$ on $\dom_m$, and in the forthcoming Theorem~\ref{theo_cdl} that uses $\mathcal{B}(\dom_{\widetilde{m}})$ and $\Pr$ as $\otimes^{\widetilde{m}} d\mu $ on $\dom_{\widetilde{m}}$. 

The following probability events are related to the construction of $\widetilde{L}_1,\ldots,\widetilde{L}_n$ satisfying P1 and P2:  
$$
\evquad  :=  \left\{ \widetilde{y}^1,\ldots,\widetilde{y}^{\widetilde{m}} \in \dom \, : \, \sum_{j,k=1}^n | \langle \widetilde{L}_j , \widetilde{L}_k\rangle_{\widetilde{m}} - \delta_{jk} |^2  \leq  \ortopar^2 \right\}, 
$$
$$
\evrank := \left\{ \widetilde{y}^1,\ldots,\widetilde{y}^{\widetilde{m}} \in \dom \, : \, \textrm{span}( \widetilde{L}_1,\ldots,\widetilde{L}_n) = V_n \right\}. 
$$
In the notation $\evrank$, the subscript points out the dependence on $\dom$ in the construction of $\widetilde{L}_1,\ldots,\widetilde{L}_n$, further discussed in Section~\ref{sec:const_basis}.
Notice that both events $\evquad $ and $\evrank$ do not depend on $y^1,\ldots,y^m$. 

We define the quantity
$$
K_n=K_n(\dom):=\sup_{y \in \dom} k_n(y),  
$$
that in general depends on $\dom$ and $V_n$.  
Thanks to the inclusion $L^\infty(\dom) \subset L^2(\dom,\mu)$ on any $\dom$ bounded, the lower bound $K_n\geq n$ holds for any $n\geq1$. 
When $\dom=[-1,1]^d$ and $V_n$ is a downward closed polynomial space, we also have the following upper bound from \cite{CCMNT2013} for any $n\geq 1$: 
\begin{equation}
\label{eq:up_bound_K}
K_n \leq n^2.
\end{equation}

For any $\widetilde{\delta} \in [0,1]$, define the probability event: 
$$
\evN:=\left\{ \widetilde{y}^1,\ldots,\widetilde{y}^{\widetilde{m}}\in \dom \textrm{ s.t. } \bigcap_{u \in V_n} \left\{ (1- \widetilde{\delta}) \| u \|^2 \leq \|u\|_{\widetilde{m}}^2  \leq (1+\widetilde{\delta}) \| u \|^2 \right\} \right\}.
$$
The next result was proven in \cite{CDL} in a slightly different form, that we rewrite with $\alpha=2m^{-r}$, as in Theorem~\ref{thm:mio}.  
\begin{theorem}
\label{theo_cdl}
In any dimension $d$, for any bounded $\dom \subset \mathbb{R}^d$, for any $\alpha>0$, $\widetilde{\delta} \in (0,1)$ and $n\geq1$, if  
\begin{equation*}
\widetilde{m}
\geq 
\dfrac{ K_n }{\xi(\widetilde{\delta})} \ln\left( \dfrac{2n}{\alpha} \right), 
\end{equation*}
and $\widetilde{y}_1,\ldots,\widetilde{y}_{\widetilde{m}}$ are iid random samples from $\mu$  
then $\Pr\left( \evN \right) > 1 - \alpha$.
\end{theorem}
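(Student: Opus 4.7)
My plan is to reformulate the event $\evN$ as a spectral-norm condition on a random sum of matrices, and then invoke a matrix Chernoff inequality. Concretely, writing any $u \in V_n$ in the $L^2(\dom,\mu)$-orthonormal basis as $u = \sum_{j=1}^n c_j L_j$, we have $\| u\|^2 = \| c \|_{\ell^2}^2$ while
\[
\| u \|_{\widetilde{m}}^2 = \frac{1}{\widetilde{m}} \sum_{i=1}^{\widetilde{m}} \Bigl( \sum_{j=1}^n c_j L_j(\widetilde{y}^i)\Bigr)^2 = c^\top \widetilde{H} c,
\]
where $\widetilde{H} \in \mathbb{R}^{n\times n}$ has entries $\widetilde{H}_{jk} := \langle L_j, L_k\rangle_{\widetilde{m}}$. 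Hence the event $\evN$ is exactly $\{ \| \widetilde{H} - I \| \leq \widetilde{\delta} \}$ (spectral norm), by Courant--Fischer.

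Next I would decompose $\widetilde{H} = \widetilde{m}^{-1} \sum_{i=1}^{\widetilde{m}} X_i$ as a sum of independent self-adjoint random matrices, where $X_i \in \mathbb{R}^{n \times n}$ has entries $(X_i)_{jk} := L_j(\widetilde{y}^i) L_k(\widetilde{y}^i)$. The two key properties are: first, by $L^2(\dom,\mu)$-orthonormality of $L_1,\ldots,L_n$ one has $\mathbb{E}[X_i] = I$; second, since $X_i$ is the rank-one matrix $L(\widetilde{y}^i) L(\widetilde{y}^i)^\top$ (where $L(y) := (L_1(y),\ldots,L_n(y))^\top$), its spectral norm equals $\| L(\widetilde{y}^i)\|_{\ell^2}^2 = k_n(\widetilde{y}^i)$, uniformly bounded by $K_n$.

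I would then apply the matrix Chernoff inequality (in the form used in \cite{CM2016} and tracing back to Tropp), which for a sum of independent, self-adjoint, bounded random matrices $X_i$ with $\mathbb{E}[X_i]=I$ and $\|X_i\|\leq K_n$ yields
\[
\Pr\bigl( \| \widetilde{H} - I \| > \widetilde{\delta} \bigr) \leq 2 n \exp\Bigl( -\dfrac{\widetilde{m}\, \xi(\widetilde{\delta})}{K_n} \Bigr),
\]
with $\xi(\widetilde{\delta}) = (1+\widetilde{\delta})\ln(1+\widetilde{\delta}) - \widetilde{\delta}$ appearing as the natural rate function. Imposing that the right-hand side be at most $\alpha$ gives the sample-size threshold $\widetilde{m} \geq K_n \xi(\widetilde{\delta})^{-1} \ln(2n/\alpha)$, completing the proof.

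I do not expect any real obstacle: the only subtle point is recognizing that the ``for all $u \in V_n$'' statement collapses into a single spectral-norm condition, after which the argument is the standard one already used to prove Theorem~\ref{thm:mio} (with $\mu$ and the unit weight replacing $\sigma_n$ and $w$). The bound $\| X_i \| \leq K_n$ is what replaces the uniform bound $k_n/w = n$ available in the weighted setting and is responsible for $K_n$ appearing in the threshold instead of $n$.
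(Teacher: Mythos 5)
Your proposal is correct, and it is essentially the standard argument behind this result: the paper itself offers no proof of Theorem~\ref{theo_cdl}, merely citing \cite{CDL}, and your reduction of the event $\evN$ to the spectral condition $\|\widetilde{H}-I\|\leq\widetilde{\delta}$ on the Gram matrix $\widetilde{H}_{jk}=\langle L_j,L_k\rangle_{\widetilde{m}}$, followed by the matrix Chernoff bound with the uniform bound $\|X_i\|\leq K_n$, is exactly the route taken in that reference (and mirrors the proof of Theorem~\ref{thm:mio} in \cite{CM2016}). Your closing observation that $K_n$ replaces $n$ precisely because the unweighted samples lack the bound $k_n/w=n$ is also the right explanation for the shape of the sample-size threshold.
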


It has been observed in \cite{AH} that the upper bound \eqref{eq:up_bound_K} can be generalised to bounded domains with the so-called $\lambda$-rectangle property, \emph{i.e.}~$\dom$ has the $\lambda$-rectangle property if $\exists \lambda \in (0,1)$ such that $\dom=\bigcup_{R \in \mathcal{R}} R$, where $\mathcal{R}$ is the set of (possibly overlapping) hyperrectangles $R\subseteq \dom$ such that $\inf_{R \in \mathcal{R}} \textrm{Vol}(R) = \lambda \textrm{Vol}(\dom)$. 

If the domain $\dom$ has the $\lambda$-rectangle property and $V_n$ is a  downward closed  polynomial space then 
\begin{equation}
\label{rect_prop}
K_n \leq \lambda^{-1} n^2, 
\end{equation}
see \cite[Theorem 6.6]{AH}. 
Simple domains that do not have the $\lambda$-rectangle property are \emph{e.g.}~the simplex and the ball.  
When $\dom$ is a convex or starlike domain and $V_n$ is a total degree polynomial space, asymptotic upper bounds for $K_n$ are available \emph{e.g.}~in \cite{Bos,KL,K,Xu}, 
see also \cite{PU} for estimates of $K_n$ when $d=2$. 
With more general domains $\dom$ and/or approximation spaces $V_n$, finding upper bounds for $K_n(\dom)$ is an open problem.

\subsection{Main results}
\label{sec:main_res}
This section contains Theorem~\ref{main_theo} and the analysis of a numerical algorithm that constructs $\widetilde{L}_1,\ldots,\widetilde{L}_n$.
Theorem~\ref{main_theo} states conditions ensuring that with large probability $\widetilde{G}$ stays close to the identity matrix in spectral norm, and that the estimator $\fun_T$ quasi-optimally converges in expectation, when the $\widetilde{L}_j$ are $\ortopar$-orthonormal. 
Theorem~\ref{main_theo} applies in general to any orthonormalisation algorithm. 
Its proof is postponed to Section~\ref{sec:intermediate}. 
In Theorem~\ref{main_theo} we assume that $\Pr(  \evquad \cap \evrank ) \geq 1-\conf$ for some $\conf \in [0,\frac12)$.  
This assumption means that, with probability at least $1-\conf$, the chosen orthonormalisation algorithm can construct $\widetilde{L}_1,\ldots,\widetilde{L}_n$ that are $\ortopar$-orthonormal and span the whole $V_n$, using $\widetilde{m}$ random samples $\widetilde{y}^1,\ldots,\widetilde{y}^{\widetilde{m}}$.    
In this respect, $\conf=\conf(\ortopar,\dom,\widetilde{m})$ represents the failure probability of the orthonormalisation algorithm. 
In some settings $\conf$ is known from the analysis, see Section~\ref{sec:const_basis}, and if not, in any case, it can be numerically estimated for the given domain $\dom$ and threshold $\ortopar$.

In Section~\ref{sec:const_basis} we discuss an orthonormalisation algorithm based on Householder QR factorisation, which constructs $\widetilde{L}_1,\ldots,\widetilde{L}_n\in V_n$ provably $\ortopar$-orthonormal with $\ortopar \approx \epsilon_M \widetilde{m}n^{3/2}$, and achieves $\conf=0$ when $V_n$ is a multivariate polynomial space. 
Corollary~\ref{coro_theo_main} contains the application of Theorem~\ref{main_theo} to such an algorithm. 

\begin{theorem}
\label{main_theo}
In any dimension $d$, for any bounded domain $\dom \subset \alldom$, for any $\alpha,\conf\in [0,\frac12)$, $\ortopar \in [0,1)$, $\delta \in (0,1-\ortopar)$, $\widetilde{\delta} \in (0,1)$ and $n\geq 1$, if the following conditions hold true
\begin{enumerate}[i)]
\item \label{itemthird} $m \geq \dfrac{4n(1+\ortopar)}{\delta^2} \ln\left( \dfrac{2n}{ \alpha} \right)$,
\item \label{itemfirst} $\widetilde{m}\geq \dfrac{K_n }{\xi(\widetilde{\delta})} \ln\left( \dfrac{2n}{\alpha} \right)$,
\item \label{itemsec} $\widetilde{y}^1,\ldots,\widetilde{y}^{\widetilde{m}} \stackrel{\textrm{iid}}{\sim} \mu$,
\item \label{itemfourth} $y^1,\ldots,y^m \stackrel{\textrm{iid}}{\sim} \widetilde{\sigma}_n$,
\item \label{itemfifth} $\Pr\left( \evquad \cap \evrank\right) \geq 1- \conf$, 
\end{enumerate}
then 
\begin{enumerate}[I)]
\item the matrix $\widetilde{G}$ satisfies 
\begin{equation}
\label{eq:first_result}
\Pr\left(
\| \widetilde{G} - I   \| \geq \delta + 
\ortopar
\right) 
\leq \alpha + \conf;
\end{equation}

\item if $\fun\in L^2(\dom,\mu)$ satisfies \eqref{eq:unif_bound} then the estimator $\fun_T$ satisfies 
\begin{align}
\mathbb{E}\left( \| \fun - \fun_T  \|^2   \right) 
\leq  
\left( 1+ 
\tau_2(n)
\right) e_n(\fun)^2  
+ 
\tau_\infty(n) 
 e_n^\infty(\fun)^2  
+ 8 \bou^2 (\alpha 
+ \conf ), 
\label{eq:second_result}
\end{align}
where 
$$
\tau_2(n):= 
\dfrac{1+\ortopar(n+1)}{1-\widetilde{\delta}}
\dfrac{ \delta^2 (1+\ortopar) }{ 4 
(1-\delta-\ortopar)^{2}
 \ln(2n/\alpha) }, 
\qquad 
\tau_\infty(n):= 
\dfrac{1+\ortopar(n+1)}{1-\widetilde{\delta}}
\dfrac{ \xi(\widetilde{\delta}) (1+\ortopar) n }{  
(1-\delta-\ortopar)^{2}
K_n \ln(2n/\alpha ) }. 
$$
\end{enumerate}
\end{theorem}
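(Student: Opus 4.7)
The plan is to attack the two conclusions separately, building (II) on (I). For the stability statement (I), I would condition on the first-level samples $\widetilde{y}^1,\ldots,\widetilde{y}^{\widetilde{m}}$ and restrict to the event $\evquad \cap \evrank$, whose complement has probability at most $\conf$ by hypothesis \ref{itemfifth}. On this event the $\widetilde{m}$-discrete Gram matrix $\widetilde{H}$ with entries $\widetilde{H}_{jk} := \langle \widetilde{L}_j, \widetilde{L}_k\rangle_{\widetilde{m}}$ satisfies $\|\widetilde{H} - I\| \leq \|\widetilde{H}-I\|_F \leq \ortopar$ by the very definition of $\ortopar$-orthonormality. Writing $\widetilde{G} = m^{-1}\sum_{i=1}^m X_i$ with $X_i := w(y^i)\widetilde{L}(y^i)\widetilde{L}(y^i)^\top$ (where $\widetilde{L}(y)$ collects the $\widetilde{L}_j(y)$ as a column vector), the identity $d\widetilde{\sigma}_n = w^{-1}d\widetilde{\mu}$ yields $\mathbb{E}[X_i \mid \widetilde{y}] = \widetilde{H}$, while the normalisation of $w$ gives the identity $\|X_i\| = \gamma$, and the $\ortopar$-orthonormality forces $\gamma = \sum_j \widetilde{H}_{jj} \leq n(1+\ortopar)$. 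Feeding these data into the matrix Bernstein inequality, with hypothesis \ref{itemthird} supplying the needed sample size, delivers $\Pr(\|\widetilde{G}-\widetilde{H}\|\geq \delta \mid \widetilde{y}) \leq \alpha$ on the conditioning event. The triangle inequality $\|\widetilde{G}-I\| \leq \|\widetilde{G}-\widetilde{H}\| + \|\widetilde{H}-I\|$ and a union bound over the two layers of randomness then give~\eqref{eq:first_result}.

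For the error estimate (II), I introduce the good event
$$\mathcal{G} := \{\|\widetilde{G}-I\| \leq \delta + \ortopar\} \cap \evquad \cap \evrank \cap \evN,$$
whose failure probability is at most $2(\alpha+\conf)$ by combining part (I), Theorem~\ref{theo_cdl} under hypothesis \ref{itemfirst}, and hypothesis \ref{itemfifth}. Off $\mathcal{G}$, the truncation ensures $|\fun-\fun_T| \leq 2\bou$ pointwise, which produces the $8\bou^2(\alpha+\conf)$ residual in~\eqref{eq:second_result}. On $\mathcal{G}$, the $L^2$-orthogonality $\fun-P_n\fun \perp V_n$ yields the Pythagorean split $\|\fun-\fun_W\|^2 = e_n(\fun)^2 + \|P_n\fun-\fun_W\|^2$, locking the coefficient of $e_n(\fun)^2$ at one. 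For $v = \sum c_j \widetilde{L}_j \in V_n$ I chain three norm equivalences: $\|v\|^2 \leq (1-\widetilde{\delta})^{-1}\|v\|_{\widetilde{m}}^2$ from $\evN$; then $\|v\|_{\widetilde{m}}^2 = c^\top\widetilde{H}c \leq (1+\ortopar(n+1))\|c\|_{\ell^2}^2$ through a careful spectral bound on $\widetilde{H}-I$ compatible with $\ortopar$-orthonormality (splitting diagonal from off-diagonal contributions); and finally $\|c\|_{\ell^2}^2 \leq (1-\delta-\ortopar)^{-1}\|v\|_m^2$ from part (I). Combining with the least-squares Pythagoras $\|P_n\fun-\fun_W\|_m^2 \leq \|\fun-P_n\fun\|_m^2$ yields an estimate of the form $\|P_n\fun-\fun_W\|^2 \leq C\|\fun-P_n\fun\|_m^2$, with $C$ given by the composed factors.

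Iterated conditional expectation then converts $\|\fun-P_n\fun\|_m^2$ into $\|\fun-P_n\fun\|_{\widetilde{m}}^2$ via the change of measure $d\widetilde{\sigma}_n = w^{-1}d\widetilde{\mu}$, and a further expectation over $\widetilde{y}$ into $e_n(\fun)^2$, producing the $\tau_2(n) e_n(\fun)^2$ contribution once hypothesis \ref{itemthird} is invoked to recast $\delta^2/\ln(2n/\alpha)$ in terms of $n/m$ through a matrix Bernstein tail bound on the fluctuation of $\fun_W - P_n\fun$. The $L^\infty$ term $e_n^\infty(\fun)^2$ arises by sharpening this step: substituting the $L^\infty$ best approximant $v^\infty \in V_n$ wherever a deterministic pointwise bound is needed, the inequality $|\fun-v^\infty| \leq e_n^\infty(\fun)$ feeds into a scalar Bernstein-type concentration of $\|\fun-v^\infty\|_{\widetilde{m}}^2$ around $\|\fun-v^\infty\|^2$, and the resulting $n/\widetilde{m}$ scaling of $\tau_\infty(n)$ emerges from hypothesis \ref{itemfirst} through the identity $\xi(\widetilde{\delta})/(K_n\ln(2n/\alpha)) = 1/\widetilde{m}$ at the lower bound.

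The main obstacle, I expect, is the careful bookkeeping: four norms ($\|\cdot\|$, $\|\cdot\|_{\widetilde{m}}$, $\|\cdot\|_m$, and the $\ell^2$ norm on coefficients in the $\widetilde{L}_j$-basis) interact through five stochastic ingredients ($\evquad$, $\evrank$, $\evN$, the stability of $\widetilde{G}$, and the Bernstein fluctuation of the residual), and each pairwise equivalence is only valid up to factors in $\ortopar$, $\widetilde{\delta}$, $\delta$ that must be composed in exactly the right order to produce the stated $\tau_2(n)$ and $\tau_\infty(n)$. A secondary subtlety is that $P_n\fun \in V_n$ admits an expansion in the $\widetilde{L}_j$-basis only thanks to $\evrank$, which is precisely what lets the least-squares optimality transfer between the two bases.
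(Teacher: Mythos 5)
Your argument for item I) is essentially the paper's: condition on $\widetilde{y}^1,\ldots,\widetilde{y}^{\widetilde{m}}$, note that on $\evquad\cap\evrank$ the conditional mean of $\widetilde{G}$ is the $\widetilde{m}$-discrete Gram matrix, which is within $\ortopar$ of $I$ in Frobenius norm, bound $\|\ms\|=\gamma\leq n(1+\ortopar)$ and the second moment via $\ms^\top\ms=\gamma\ms$, apply matrix Bernstein, and finish with the triangle inequality and a union bound. That part is sound.

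Item II) has a genuine gap in the final step. Your deterministic chain on the good event gives, for $v=\fun_W-P_n\fun=P_n^m g$ with coefficient vector $c$, the bound $\|v\|^2\leq\frac{1+\ortopar(n+1)}{1-\widetilde{\delta}}\|c\|_{\ell^2}^2\leq\frac{1+\ortopar(n+1)}{(1-\widetilde{\delta})(1-\delta-\ortopar)}\|g\|_m^2$, and then you take expectations of $\|g\|_m^2$. But $\mathbb{E}(\|g\|_m^2)=\mathbb{E}(\|g\|_{\widetilde{m}}^2)=\|g\|^2$ exactly: there is no gain of a factor $n/m$ in this route, so it yields $(1+C)\,e_n(\fun)^2$ with $C$ of order one, not the stated $\tau_2(n)=O(1/\ln(2n/\alpha))$, and it produces no $e_n^\infty(\fun)^2$ term at all. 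The invocations of ``a matrix Bernstein tail bound on the fluctuation of $\fun_W-P_n\fun$'' and of ``a scalar Bernstein-type concentration of $\|\fun-v^\infty\|_{\widetilde{m}}^2$'' do not repair this: no concentration inequality enters this part of the paper's proof, and it is not clear what quantity you would apply one to. The mechanism that actually produces $\tau_2$ and $\tau_\infty$ is an exact two-level expectation computation. One writes $a=\widetilde{G}^{-1}\widetilde{h}$ with $\widetilde{h}_k=\langle g,\widetilde{L}_k\rangle_m$, bounds $\|a\|_{\ell^2}^2\leq(1-\delta-\ortopar)^{-2}\|\widetilde{h}\|_{\ell^2}^2$ (note the square, which is how $(1-\delta-\ortopar)^{-2}$ enters the stated constants), and then computes $\sum_k\mathbb{E}(|\langle g,\widetilde{L}_k\rangle_m|^2)$ by expanding the double sum over the $m$ samples: the off-diagonal (squared-mean) part equals $(1-\frac1m)\,\mathbb{E}_{\widetilde{y}}(|\langle g,\widetilde{L}_k\rangle_{\widetilde{m}}|^2)$, whose cross terms in $\widetilde{y}^i,\widetilde{y}^j$ vanish because $g\perp V_n$ and $\widetilde{L}_k\in V_n$, leaving only a diagonal contribution of order $n\,\widetilde{m}^{-1}\|g\|_{L^\infty(\dom)}^2$; the variance part contributes $n\,m^{-1}\|g\|^2$ after using $\sum_k w\,\widetilde{L}_k^2=\gamma\leq n(1+\ortopar)$. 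These two pieces, recast through hypotheses i) and ii), are precisely $\tau_2(n)e_n(\fun)^2$ and $\tau_\infty(n)e_n^\infty(\fun)^2$. Without this computation your proof establishes only quasi-optimality with an $O(1)$ constant, which is a strictly weaker statement than \eqref{eq:second_result}.
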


\begin{remark}[Comparison with Theorem~\ref{thm:mio}]
\label{stab_accu}
Theorem~\ref{thm:mio} and Theorem~\ref{main_theo} prove that $G$ and $\widetilde{G}$ are well-conditioned, respectively, when $m$ is of the order $n \ln n$, but with differently distributed random samples. 
In the proof of \eqref{eq:first_result}, $\widetilde{m}$ does not need to satisfy \ref{itemfirst}), and only needs to ensure 
a large probability 
of the event $\evquad \cap \evrank$ in \ref{itemfifth}).
Condition \ref{itemfirst}) is needed for the proof of \eqref{eq:second_result}.

The convergence estimates in Theorem~\ref{thm:mio} and Theorem~\ref{main_theo} differ due to term $\ortopar n$, whose presence is discussed in Remark~\ref{missing_orto_remma}, 
and due to the $L^\infty$-best approximation error, 
whose coefficient satisfies $\tau_\infty(n) \leq \tau_2(n)$ for any $n\geq 1$ such that $K_n \geq 2 n$. 
If $\widetilde{m}$ satisfies \ref{itemfirst}) with $K_n$ replaced by $\max\{K_n, n^2\}$, then $\tau_\infty$ decays to zero as $\ortopar/\ln n$.
\end{remark}

\begin{remark}[{Missing orthogonality of the $\widetilde{L}_1,\ldots,\widetilde{L}_n$}]
\label{missing_orto_remma}
In the proof of \eqref{eq:second_result}, 
the additional term $n\ortopar$ in \eqref{eq:term_disc_proj} arises from the fact that $\widetilde{L}_1,\ldots,\widetilde{L}_n$ are only $\ortopar$-orthonormal with $\ortopar>0$.
The term $n \ortopar$ propagates to $\tau_2$ and $\tau_\infty$ in \eqref{eq:second_result}, and is harmless as long as $\ortopar$ remains small. 
This is the case for wide ranges of $n$ and $\widetilde{m}$ since $\ortopar$ provably does not exceed $\epsilon_M \widetilde{m}n^{3/2}$ and $\epsilon_M\approx 10^{-16}$, 
see Section~\ref{sec:const_basis}.  
For example, if $\widetilde{m}=10^6$ and $n=10^3$ then $\ortopar \approx 10^{-6}$.
The numerical tests in Section~\ref{sec:numerics} 
show that 
even lower values of $\ortopar$ can be taken, of the order $10^{-12}$.

If $\widetilde{L}_1,\ldots,\widetilde{L}_n$ are assumed $\ortopar$-orthonormal with $\ortopar=0$ then, by Parseval's identity, \eqref{eq:term_disc_proj} simplifies to 
$$
\| P_n^m g\|^2 \leq \dfrac{1}{1-\widetilde{\delta} } \, \|a \|^2_{\ell^2},
$$
and the same proof of item II) gives \eqref{eq:second_result} with 
$$
\tau_2(n):= 
\dfrac{ \delta^2}{ 4 
(1-\delta)^{2}
 \ln(2n/\alpha) }, 
\qquad 
\tau_\infty(n):= 
\dfrac{ \xi(\widetilde{\delta})
 \, n }{ 
(1-\delta)^{2}
K_n \ln(2n/\alpha ) }, 
$$
being strictly decreasing functions that tend to zero as $n\to +\infty$. Notice that $K_n\geq n$.   
\end{remark}

\subsection{Proofs and intermediate results}
\label{sec:intermediate}
Given two events $X,Y$ such that $\Pr(Y)>0$, we denote by $\Pr(X |Y ):=\Pr(X \cap Y  )/\Pr(Y)$ the conditional probability of $X$ given $Y$. 

\begin{proof}[Proof of item I) in Theorem~\ref{main_theo}]
For convenience we define the events $\evdub:=\evquad \cap \evrank$, $\evB:=\{ \| \widetilde{G} - I   \| < \delta + \ortopar\}$, $\evC:=\{\| \widetilde{G} - \mathbb{E}(\widetilde{G}) \| < \delta \}$ and $\evD:=\{\| \mathbb{E}(\widetilde{G}) - I \| \leq \ortopar\}$.   
The expectation is on the $y^1,\ldots,y^m$, for given $\widetilde{y}^1,\ldots,\widetilde{y}^{\widetilde{m}}$. 
Indeed 
$\| \widetilde{G} - I   \| \leq \| \widetilde{G} - \mathbb{E}(\widetilde{G}) \| + \| \mathbb{E}(\widetilde{G}) - I \|$
implies 
$\evC\cap \evD \subseteq \evB$, and hence 
\begin{equation}
\Pr(\evB |  \evdub )  \geq \Pr( \evC \cap \evD | \evdub).
\label{eq:first_step}
\end{equation}
Using in sequence the definition of $\widetilde{G}$, linearity of expectation, \ref{itemfourth}) and \eqref{eq:alt_mes_opt} we obtain 
\begin{equation}
\label{eq:step_unk}
\mathbb{E}(\widetilde{G}_{jk} ) 
= 
\dfrac{1}{m}
\sum_{i=1}^{m} 
\mathbb{E}\left(
 w(y^i) \widetilde{L}_j(y^i) \widetilde{L}_k(y^i) 
\right)
= \sum_{i=1}^{\widetilde{m}}  
w(\widetilde{y}^i) 
\widetilde{L}_j(\widetilde{y}^i) \widetilde{L}_k(\widetilde{y}^i) 
\widetilde{\sigma}_n(\widetilde{y}^i)
= \sum_{i=1}^{\widetilde{m}}  
\widetilde{L}_j(\widetilde{y}^i) \widetilde{L}_k(\widetilde{y}^i) 
\widetilde{\mu}(\widetilde{y}^i) 
=\langle \widetilde{L}_j,\widetilde{L}_k \rangle_{\widetilde{m}},   
\end{equation}
for any $j,k=1,\ldots,n$.  
On the event $ \evdub$ for any $n\geq 1$ and $\ortopar \in [0,1)$ we have 
\begin{align}
\| \mathbb{E}(\widetilde{G}) - I \|^2 
\leq  
 \| \mathbb{E}(\widetilde{G}) - I  \|_{F}^2                                      
=
 \sum_{j,k=1}^{n} \left| \langle \widetilde{L}_j, \widetilde{L}_k \rangle_{\widetilde{m}} - \delta_{jk} \right|^2  
\leq 
\ortopar^2.
\label{bound_exp_Gtilde_id}
\end{align}
As a consequence of the above bound 
\begin{equation}
\Pr(\evD | \evdub ) = 1.
\label{eq:fourth_step}
\end{equation}

From Lemma~\ref{bernstein}, under conditions \ref{itemthird}) and  \ref{itemfourth}) it holds that 
\begin{equation}
\Pr(\evC |  \evdub 	) >  1 - \alpha. 
\label{eq:second_step}
\end{equation}

Using \eqref{eq:second_step} and \eqref{eq:fourth_step}, since 
$\Pr( \evC^C \cup \evD^C |  \evdub ) \leq \Pr( \evC^C |  \evdub) + \Pr( \evD^C |  \evdub ) \leq \alpha$  
we obtain 
\begin{equation}
\Pr( \evC \cap \evD |  \evdub) = 1 - \Pr( \evC^C  \cup \evD^C |  \evdub) > 1 - \alpha. 
\label{eq:fifth_step}
\end{equation}

Finally using in sequence \eqref{eq:first_step}, \eqref{eq:fifth_step} and \ref{itemfifth}) gives 
\begin{align*}
\Pr(\evB) \geq \Pr(\evB |  \evdub ) \Pr( \evdub) \geq 
& 
\Pr(\evC \cap \evD |  \evdub ) \Pr(   \evdub ) > (1-\alpha)(1-\conf) \geq 1-\alpha-\conf. 
\end{align*}
\end{proof}

\begin{lemma}
\label{lemma_bound_gamma}
On the event $\evquad$ the following holds:   
\begin{equation}
\delta_{jk} - \ortopar \leq \langle \widetilde{L}_j , \widetilde{L}_k\rangle_{\widetilde{m}} \leq \delta_{jk} + \ortopar, \quad j,k=1,\ldots,n;
\label{eq:bounds_gamma_par}
\end{equation}
\begin{equation}
n (1 - \ortopar) \leq \gamma = \sum_{j=1}^n \| \widetilde{L}_j \|_{\widetilde{m}}^2 \leq n (1 + \ortopar). 
\label{eq:bounds_gamma}
\end{equation}
\end{lemma}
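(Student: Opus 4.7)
The proof is essentially a bookkeeping exercise extracting pointwise information from the aggregated bound that defines the event $\evquad$, so my plan is short and direct.

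The plan for the first inequality is as follows. Each summand in the definition of $\evquad$ is nonnegative, so on $\evquad$ any single term is bounded by the whole sum:
$$
|\langle \widetilde{L}_j,\widetilde{L}_k\rangle_{\widetilde{m}}-\delta_{jk}|^2 \leq \sum_{j',k'=1}^n |\langle \widetilde{L}_{j'},\widetilde{L}_{k'}\rangle_{\widetilde{m}}-\delta_{j'k'}|^2 \leq \ortopar^2,
$$
for every $j,k\in\{1,\dots,n\}$. Taking square roots and rewriting the resulting absolute-value inequality as a two-sided bound yields \eqref{eq:bounds_gamma_par}.

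For the second inequality I would specialise the first one to the diagonal indices $j=k$, which gives $1-\ortopar \leq \|\widetilde{L}_j\|_{\widetilde{m}}^2 \leq 1+\ortopar$ for each $j=1,\ldots,n$. Summing these $n$ bounds and recalling the definition $\gamma=\sum_{j=1}^n \|\widetilde{L}_j\|_{\widetilde{m}}^2$ given right after \eqref{eq:alt_mes_opt} immediately yields \eqref{eq:bounds_gamma}.

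There is no real obstacle here; the only thing to be careful about is to use the trivial (triangle-inequality) aggregation from the $n$ diagonal bounds rather than a Cauchy--Schwarz aggregation that would produce a sharper $\sqrt{n}\,\ortopar$ factor. Using the triangle-inequality version gives the stated $n\ortopar$ bound, which is the form needed in the subsequent proofs where $\gamma/n$ appears multiplicatively with factors of order $1+\ortopar n$ (cf.~the term $\ortopar n$ in Theorem~\ref{main_theo}).
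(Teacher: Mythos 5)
Your proof is correct and follows essentially the same route as the paper: extract the entrywise bound $|\langle \widetilde{L}_j,\widetilde{L}_k\rangle_{\widetilde{m}}-\delta_{jk}|\leq\ortopar$ from the nonnegativity of the summands in the definition of $\evquad$, then set $k=j$ and sum over $j$ to bound $\gamma$. Your closing remark about the sharper $\sqrt{n}\,\ortopar$ aggregation is a valid observation but immaterial, since the lemma asserts the $n\ortopar$ form and that is what both you and the paper prove.
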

\begin{proof}
The expression on the right-hand side below is equivalent to \eqref{eq:bounds_gamma_par}:  
\begin{align*}
\sum_{j,k=1}^n
| \langle \widetilde{L}_j , \widetilde{L}_k \rangle_{\widetilde{m}} - \delta_{jk} |^2 
\leq 
\ortopar^2
& \implies
| \langle \widetilde{L}_j , \widetilde{L}_k \rangle_{\widetilde{m}} - \delta_{jk} |^2 
\leq \ortopar^2, \quad j=1,\ldots,n.  
\end{align*}
For the proof of \eqref{eq:bounds_gamma} take $k=j$ in \eqref{eq:bounds_gamma_par} and then sum $j$ from $1$ to $n$.  
\end{proof}

The following result from \cite{Troppintro} is a consequence of Bernstein inequality for self-adjoint matrices.  
\begin{theorem}
\label{theobern}
Let $A\in \mathbb{R}^{n\times n}$ be a fixed 
matrix. Construct a 
symmetric random matrix $\ms \in \mathbb{R}^{n\times n}$ that satisfies 
$$
\mathbb{E}(\ms) = A \quad  \textrm{ and } \quad \|\ms \| \leq \gamma < +\infty.
$$ 
Compute the per-sample second moment $m_2(\ms) =\| \mathbb{E}(\ms^\top \ms)\|$.
Form the matrix sampling estimator
$$
\overline{\ms}
:= \dfrac{1}{m} \sum_{i=1}^m \ms^i, \textrm{ where each $\ms^i$ is an independent copy of $\ms$.}
$$
Then for all $\delta \geq 0$ the estimator satisfies 
$$
\Pr\left( \| 
\overline{\ms}
- A \| \geq \delta \right) \leq 2n \exp\left( \dfrac{ - m \delta^2/2  }{m_2(\ms)+2\gamma \delta /3 }   \right).
$$
\end{theorem}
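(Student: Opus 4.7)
The plan is to apply Tropp's matrix Laplace transform machinery, which reduces the tail bound to an operator cumulant estimate and then uses Lieb's concavity theorem to decouple the sum of non-commuting random matrices. Since the statement bounds the two-sided spectral deviation $\|\overline{\ms}-A\|$, I would control $\lambda_{\max}(\overline{\ms}-A)\geq\delta$ first and then repeat the argument for $-(\overline{\ms}-A)$; the prefactor $2n$ in the final estimate is produced by a union bound combined with the cost of replacing $\lambda_{\max}\exp$ by $\mathrm{tr}\exp$ on the dimension-$n$ matrix.

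First I would recenter by setting $Z^i:=\ms^i-A$, so that the $Z^i$ are independent, symmetric, mean-zero matrices with $\|Z^i\|\leq\|\ms^i\|+\|A\|\leq 2\gamma$ (using $\|A\|=\|\mathbb{E}(\ms)\|\leq\mathbb{E}(\|\ms\|)\leq\gamma$ by convexity of the spectral norm) and per-sample variance proxy $\mathbb{E}((Z^i)^2)=\mathbb{E}(\ms^2)-A^2\preceq\mathbb{E}(\ms^\top\ms)$, which gives $\|\sum_{i=1}^{m}\mathbb{E}((Z^i)^2)\|\leq m\cdot m_2(\ms)$. Writing $m(\overline{\ms}-A)=\sum_i Z^i$, the Chernoff-type inequality for any $\theta>0$ reads
$$
\Pr\!\left(\lambda_{\max}\!\left(\sum_i Z^i\right)\geq m\delta\right)\leq e^{-\theta m\delta}\,\mathbb{E}\,\mathrm{tr}\,\exp\!\left(\theta\sum_i Z^i\right).
$$

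Next I would invoke the subadditivity of the matrix cumulant generating function (a direct consequence of Lieb's concavity theorem) together with the operator Bernstein cumulant bound $\log\mathbb{E}\exp(\theta Z^i)\preceq\frac{\theta^2/2}{1-2\gamma\theta/3}\,\mathbb{E}((Z^i)^2)$, valid for $\theta\in(0,3/(2\gamma))$. This operator bound lifts the scalar identity $e^x-1-x=x^2(\tfrac{1}{2}+\tfrac{x}{6}+\cdots)$ to symmetric matrices using the semidefinite domination $(Z^i)^k\preceq (2\gamma)^{k-2}(Z^i)^2$ for $k\geq 2$. Inserting the variance bound, the probability is dominated by $n\exp\!\left(-\theta m\delta+\frac{\theta^2 m\cdot m_2(\ms)/2}{1-2\gamma\theta/3}\right)$, and optimizing at $\theta^\star:=\delta/(m_2(\ms)+2\gamma\delta/3)$ collapses this to $n\exp(-(m\delta^2/2)/(m_2(\ms)+2\gamma\delta/3))$. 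Running the same argument for $-Z^i$ and taking a union bound delivers the stated two-sided estimate with the factor $2n$. The main obstacle is the operator Bernstein cumulant inequality: non-commutativity forbids a direct lift from the scalar version, and the proof relies on Lieb's concavity theorem plus careful semidefinite control of the matrix exponential's Taylor series; everything else is bookkeeping in $\theta$.
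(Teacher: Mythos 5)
Your outline is correct and is essentially the standard proof of this bound (Tropp's matrix sampling estimator corollary): recentring $Z^i=\ms^i-A$ with $\|Z^i\|\leq 2\gamma$ and $\mathbb{E}((Z^i)^2)\preceq\mathbb{E}(\ms^\top\ms)$, the matrix Laplace transform, Lieb-based subadditivity of the cumulant generating function, the Bernstein cumulant bound on $(0,3/(2\gamma))$, and the choice $\theta^\star=\delta/(m_2(\ms)+2\gamma\delta/3)$ all fit together exactly as you describe and reproduce the stated constants and the factor $2n$. Be aware that the paper itself offers no proof of this theorem --- it is quoted directly from the cited monograph \cite{Troppintro} --- so your argument reconstructs the proof of the reference rather than diverging from anything in the paper.
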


In the next lemma we apply Theorem~\ref{theobern} on the event $\evdub=\evquad \cap \evrank$ and with the fixed matrix $A=\mathbb{E}(\widetilde{G})$, where the expectation is taken over $y^1,\ldots,y^m$ for given $\widetilde{y}^1,\ldots,\widetilde{y}^{\widetilde{m}}$. 

\begin{lemma}
\label{bernstein}
For any $\alpha \in (0,1)$, $\ortopar \in [0,1)$ and $n\geq1$, under conditions \ref{itemthird}) and \ref{itemfourth}) it holds that 
$$
\Pr(\evC | \evdub) >  1 - \alpha. 
$$
\end{lemma}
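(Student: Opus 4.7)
The plan is to apply Theorem~\ref{theobern} (matrix Bernstein) to the random matrix $\widetilde{G}$ written as an empirical mean, conditionally on the event $\evdub$. Concretely, by \ref{itemfourth}) and the definition of $\widetilde{G}$,
\[
\widetilde{G} = \frac{1}{m}\sum_{i=1}^m \ms^i, \qquad \ms^i := w(y^i)\,\widetilde{L}(y^i)\widetilde{L}(y^i)^\top,
\]
where $\widetilde{L}(y) := (\widetilde{L}_1(y),\ldots,\widetilde{L}_n(y))^\top$. The matrices $\ms^i$ are iid (since the $y^i$ are), symmetric, and satisfy $\mathbb{E}(\ms^i)=\mathbb{E}(\widetilde{G})$. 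I would identify $\ms$ with a single $\ms^i$ and $A=\mathbb{E}(\widetilde{G})$ in the statement of Theorem~\ref{theobern}.

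The next step is to obtain the two ingredients required by Theorem~\ref{theobern}: a deterministic spectral bound on $\ms^i$ and a bound on the per-sample second moment $m_2(\ms)=\|\mathbb{E}(\ms^\top \ms)\|$. Since $\ms^i$ is rank-one, its spectral norm is exactly $w(y^i)\|\widetilde{L}(y^i)\|_{\ell^2}^2 = w(y^i)\widetilde{k}_n(y^i) = \gamma$ by the very definition of the weight $w$ in \eqref{eq:weight_other}. For the second moment, using symmetry of $\ms^i$ and again $w\widetilde{k}_n=\gamma$,
\[
\mathbb{E}\bigl((\ms^i)^\top \ms^i\bigr) = \mathbb{E}\bigl(w(y^i)\widetilde{k}_n(y^i)\,\ms^i\bigr) = \gamma\,\mathbb{E}(\widetilde{G}),
\]
so $m_2(\ms) = \gamma\|\mathbb{E}(\widetilde{G})\|$. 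These are the clean identities that make matrix Bernstein applicable.

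Now I would invoke the event $\evdub$. On $\evquad$, Lemma~\ref{lemma_bound_gamma} gives $\gamma \leq n(1+\ortopar)$; on the same event, the Frobenius bound \eqref{bound_exp_Gtilde_id} gives $\|\mathbb{E}(\widetilde{G})-I\|\leq \ortopar$, hence $\|\mathbb{E}(\widetilde{G})\|\leq 1+\ortopar$, and therefore $m_2(\ms)\leq n(1+\ortopar)^2$ on $\evdub$. Plugging these into the bound of Theorem~\ref{theobern} yields, conditionally on $\evdub$,
\[
\Pr(\evC^C \mid \evdub) \;\leq\; 2n\exp\!\left(\frac{-m\delta^2/2}{n(1+\ortopar)^2 + \tfrac{2}{3}n(1+\ortopar)\delta}\right) = 2n\exp\!\left(\frac{-m\delta^2}{2n(1+\ortopar)(1+\ortopar+\tfrac{2}{3}\delta)}\right).
\]

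Finally I would use the assumption $\delta\in(0,1-\ortopar)$ to check that $1+\ortopar+\tfrac{2}{3}\delta \leq 2$, via $\ortopar+\tfrac{2}{3}\delta < \ortopar + \tfrac{2}{3}(1-\ortopar)=\tfrac{2}{3}+\tfrac{1}{3}\ortopar \leq 1$. Thus the denominator in the exponent is bounded by $4n(1+\ortopar)$, so the condition \ref{itemthird}) on $m$ is exactly what is needed to force the right-hand side below $\alpha$, giving $\Pr(\evC \mid \evdub) > 1-\alpha$. The main (minor) obstacle is just the bookkeeping that turns the Bernstein bound into the clean $4n(1+\ortopar)/\delta^2$ form, which is handled by the inequality $\ortopar+\tfrac{2}{3}\delta \leq 1$; everything else follows directly from Theorem~\ref{theobern}, Lemma~\ref{lemma_bound_gamma}, and \eqref{bound_exp_Gtilde_id}.
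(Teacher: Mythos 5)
Your proposal is correct and follows essentially the same route as the paper: the same rank-one decomposition $\widetilde{G}=\frac{1}{m}\sum_i \ms^i$, the same identities $\|\ms\|=\gamma$ and $m_2(\ms)=\gamma\|\mathbb{E}(\widetilde{G})\|$, the same use of Lemma~\ref{lemma_bound_gamma} and \eqref{bound_exp_Gtilde_id} on the event $\evdub$, and the same final bookkeeping reducing the Bernstein exponent to the $4n(1+\ortopar)/\delta^2$ form via $\ortopar+\tfrac{2}{3}\delta<1$. No gaps.
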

\begin{proof}
We define the random matrix $\ms=\ms(y)$ whose components are 
$$
\ms_{jk}(y):=w(y) \widetilde{L}_j(y) \widetilde{L}_k(y), \quad j,k=1,\ldots,n, 
$$
and $y$ is distributed as $\widetilde{\sigma}_n$. 
Using \ref{itemfourth}), define $\ms^i=\ms(y^i)$ for $i=1,\ldots,m$ as $m$ copies of the random matrix $\ms$. Notice that, from \ref{itemfourth}), on the event $ \evdub$ the $\ms^1,\ldots,\ms^m$ are  mutually independent.  They also satisfy  
$$
\widetilde{G}=\dfrac{1}{m} \sum_{i=1}^m \ms^i.   
$$
From linearity of expectation, condition \ref{itemfourth}) and \eqref{eq:step_unk} we obtain 
$
\mathbb{E}(\ms_{jk})
=
\mathbb{E}(\widetilde{G}_{jk} )  
= 
\langle \widetilde{L}_j, \widetilde{L}_k \rangle_{\widetilde{m}}  
$.  
For any $n \geq1$ and $\ortopar\in [0,1)$, from \eqref{bound_exp_Gtilde_id} on the event $ \evdub$  we have $\| \mathbb{E}(\ms) - I \| = \| \mathbb{E}(\widetilde{G}) - I \| \leq \ortopar$, and this is equivalent to 
$$
1- \ortopar \leq \| \mathbb{E}(\ms)  \| \leq 1+ \ortopar.
$$

Notice that, from the expression of $w$ in \eqref{eq:weight_other},  
$$
( \ms^\top \ms  )_{pq} 
= w^2 \widetilde{L}_p \widetilde{L}_q   \sum_{k=1}^n  \widetilde{L}_k^2 
=  w \gamma \widetilde{L}_p \widetilde{L}_q 
= \gamma \ms_{pq}, \quad p,q=1,\ldots,n, 
$$
and therefore $\ms^\top \ms= \gamma \ms$. 
Define now $m_2(\ms):= \| \mathbb{E}(\ms^\top \ms ) \| = \gamma \| \mathbb{E}(\ms) \|$. 
Thanks to the previous bounds 
$$
m_2(\ms) \leq \gamma (1+ \ortopar). 
$$

Since $\ms$ is a rank-one matrix, 
\begin{align*}
\| \ms \|^2 = \| \ms \|^2_F                
            = \textrm{trace}( \ms^T \ms )  
            =  \textrm{trace}( \gamma \ms ) 
             = w \gamma  \sum_{p=1}^n  \widetilde{L}_{p}^2  
             = \gamma^2. 
\end{align*}

Finally, on the event $\evdub$, we apply Theorem~\ref{theobern} with the fixed matrix $\mathbb{E}(\widetilde{G})$. On the event $\evdub$ the parameter $\gamma$ satisfies the uniform bound \eqref{eq:bounds_gamma}, and we obtain 
\begin{align*}
\Pr\left( \{  \|  \widetilde{G} - \mathbb{E}(\widetilde{G})  \| \geq \delta \} \, | \,  
 \evdub \right)  
\leq 
2n \exp\left(  \dfrac{ - m \delta^2 /2 }{ 
n(1+ \ortopar)^2
+ \frac{2   n(1+\ortopar) \delta}{3} } \right).  
\end{align*}
If condition \ref{itemthird}) holds true, since
$$
m 
\geq 
\ln\left( \dfrac{2n}{\alpha} \right)
\dfrac{4n(1+\ortopar)}{\delta^2}
>
\ln\left( \dfrac{2n}{ \alpha} \right)
\dfrac{2}{\delta^2}  \left( 
n(1+\ortopar)^2 
+ \dfrac{2 n(1+\ortopar)\delta}{3} 
\right) 
$$
we obtain the thesis. 
\end{proof}
\begin{proof}[Proof of item II) in Theorem~\ref{main_theo}]
The proof of the error estimate proceeds in the same way as the analogous proof of \cite[Theorem~2.1]{CM2016}, with some differences due to the missing orthogonality of the $\widetilde{L}_k$. 

From Theorem~\ref{theo_cdl} under \ref{itemfirst}) it holds $\Pr\left( \evN \right) > 1 - \alpha$. 
Since $\Pr( \evN^C \cup \evdub^C ) \leq 
\Pr(  \evN^C ) + \Pr( \evdub^C  ) 
\leq 
\alpha + \conf $ 
we obtain 
\begin{equation}
\Pr( \evN \cap \evdub ) 
= 
1 - \Pr( \evN^C  \cup \evdub^C   ) > 1 - \alpha -\conf.
\label{eq:third_step}
\end{equation}

Define $\evtrib:= \evB \cap \evN \cap  \evdub$. 
Combining \eqref{eq:third_step} and item I) it  holds that $\Pr( \evtrib) > 1 - 2\alpha - 2\conf$. 
On the event $\evtrib^C$ it holds $\| \fun - \fun_T \| \leq \| \fun \| + \| \fun_T \| \leq 2\bou$. 
Since $|\fun(y) - \fun_T(y)| \leq |\fun(y) - \fun_W(y)|$ for all $y \in \dom$, we also have $\| \fun - \fun_T \| \leq \| \fun - \fun_W \|$. 
Denoting $g:=\fun - P_n \fun$, on the event $\evtrib$ it holds that 
\begin{equation}
\label{eq:bound_trunc_proj_pit}
\| \fun - \fun_T  \|^2 
\leq  
\| \fun - \fun_W  \|^2 
=
 \| \fun - P_n \fun \|^2 + \| P_n \fun - P_n^m \fun   \|^2
= 
 \| g \|^2 + \| P_n^m g \|^2,  
\end{equation}
where we have used that $g$ is orthogonal to $V_n$, that $\textrm{span}(\widetilde{L}_1,\ldots,\widetilde{L}_n)=V_n$, and that $P_n^m P_n \fun = P_n \fun$. 
We expand $P_n^m g = \sum_{j=1}^n a_j \widetilde{L}_j$ over the $\widetilde{L}_j$, with $a=(a_j)_{j=1,\ldots,n}$ being the solution to $\widetilde{G}a=\widetilde{h}$ and $\widetilde{h}:=( \langle g,\widetilde{L}_k \rangle_m  )_{k=1,\ldots,n}$.

Using in sequence the norm equivalence in the event $\evN$, Lemma~\ref{lemma_bound_gamma}, $2a_j a_k \leq a_j^2 + a_k^2$, we obtain 
\begin{align}
\| P_n^m g \|^2 
= &
\left\| \sum_{k=1}^n a_k \widetilde{L}_k \right\|^2 
\nonumber
\\
\leq & 
\dfrac{1}{1-\widetilde{\delta}}  \left\| \sum_{k=1}^n a_k \widetilde{L}_k \right\|^2_{\widetilde{m}}  
\nonumber
\\
= & \dfrac{1}{1-\widetilde{\delta}}  \dfrac{1}{\widetilde{m}}  \sum_{i=1}^{\widetilde{m}} \sum_{k=1}^n  \sum_{j=1}^n a_k a_j \widetilde{L}_k(\widetilde{y}^i)  \widetilde{L}_j(\widetilde{y}^i)   
\nonumber 
\\
\leq & 
\dfrac{1}{1-\widetilde{\delta}}  
\dfrac{1}{\widetilde{m}}  
\left(  
\sum_{k=1}^n a_k^2 
\sum_{i=1}^{\widetilde{m}}
|\widetilde{L}_k(\widetilde{y}^i)|^2
+ 
\left|
\sum_{j=1}^n 
\sum_{k=1\atop k \neq j}^n
a_j a_k 
\sum_{i=1}^{\widetilde{m}}
\widetilde{L}_j(\widetilde{y}^i) 
\widetilde{L}_k(\widetilde{y}^i) 
\right|
\right)
\nonumber
\\
\leq & 
\dfrac{1}{1-\widetilde{\delta}}  
\left(  
(1+\ortopar)
\sum_{k=1}^n a_k^2 
+ 
\ortopar
\sum_{j=1}^n 
\sum_{k=1\atop k \neq j}^n
\dfrac{ a_j^2 + a_k^2}{2}
\right)
\nonumber
\\
\leq & 
\dfrac{1+\ortopar(n+1)}{1-\widetilde{\delta}}  
\| a \|^2_{\ell^2}. 
\label{eq:term_disc_proj}
\end{align}
Thus replacing \eqref{eq:term_disc_proj} in \eqref{eq:bound_trunc_proj_pit} provides the bound 
$$
\| \fun - \fun_T \|^2 \leq e_n(\fun)^2 + 
\dfrac{1+\ortopar(n+1)}{1-\widetilde{\delta}}  
\| a \|^2_{\ell^2}. 
$$
On the event $\evtrib$ item I) gives 
$\| \widetilde{G} \| \geq 1-\delta - \ortopar \implies 
\| \widetilde{G}^{-1} \| \leq (1-\delta - \ortopar)^{-1} $.   
Since $a=\widetilde{G}^{-1}\widetilde{h}$ we have 
$$
 \| \fun - \fun_T \|^2   \leq e_n(\fun)^2 + 
\dfrac{1+\ortopar(n+1)}{1-\widetilde{\delta}}  
\dfrac{1 }{(1-\delta-\ortopar)^{2} } \sum_{k=1}^n   | \langle g,\widetilde{L}_k \rangle_m    |^2.  
$$
Taking the total expectation over $y^1,\ldots,y^m,\widetilde{y}^{1},\ldots,\widetilde{y}^{\widetilde{m}}$ and using  $\Pr(\evtrib^C)\leq  2(\alpha+\conf)$ gives 
\begin{align*}
\mathbb{E}\left( \| \fun - \fun_T \|^2  \right) \leq 
& 
\left(
e_n(\fun)^2 +
\dfrac{1+\ortopar(n+1)}{1-\widetilde{\delta}} 
\dfrac{1}{ (1-\delta-\ortopar)^{2} } \sum_{k=1}^n  \mathbb{E}( | \langle g,\widetilde{L}_k \rangle_m    |^2  ) 
\right) \Pr(\evtrib) 
+ 
4\bou^2
\Pr(\evtrib^C) 
\\
\leq &
e_n(\fun)^2 + 
\dfrac{1+\ortopar(n+1)}{1-\widetilde{\delta}} 
\dfrac{1}{ (1-\delta-\ortopar)^{2} }
\sum_{k=1}^n  \mathbb{E}( | \langle g,\widetilde{L}_k \rangle_m    |^2  ) 
+ 
8\bou^2 (\alpha+\conf). 
\end{align*}

Denote with $\mathbb{E}_{\widetilde{y}}$ the expectation over $\widetilde{y}^1,\ldots,\widetilde{y}^{\widetilde{m}}$ and with $\mathbb{E}_y$ the expectation over $y^1,\ldots,y^m$.
For the second term above, using the independence of the random samples we have  
\begin{align*}
\mathbb{E}\left( | \langle g,\widetilde{L}_k \rangle_m  |^2   \right) 
= &
\dfrac{1}{m^2}
\mathbb{E}_{\widetilde{y}}
\left(
\sum_{i=1}^m
\sum_{j=1}^m
\mathbb{E}_y
\left( w(y^i) w(y^j) g(y^i) g(y^j) \widetilde{L}_k(y^i) \widetilde{L}_k(y^j)     
\right) 
\right) 
\\
= &
\dfrac{1}{m^2}\mathbb{E}_{\widetilde{y}}
\left(
m(m-1)
\left|
\mathbb{E}_y
\left( w(y) g(y) \widetilde{L}_k(y)      
\right) 
\right|^2
+ m 
\mathbb{E}_y
\left( 
|w(y) g(y) \widetilde{L}_k(y)|^2      
\right) 
\right) 
\\
= &
\mathbb{E}_{\widetilde{y}}
\left(
\left( 
1-\dfrac{1}{m}
\right)
\left|
\dfrac{1}{\widetilde{m} }
\sum_{i=1}^{\widetilde{m}}
g(\widetilde{y}^i) \widetilde{L}_k(\widetilde{y}^i)
\right|^2
+ \dfrac{1}{m} 
\dfrac{1}{\widetilde{m} }
\sum_{i=1}^{\widetilde{m}}
w(\widetilde{y}^i) 
|g(\widetilde{y}^i) \widetilde{L}_k(\widetilde{y}^i)|^2      
\right) 
\\
= &
\left( 
1-\dfrac{1}{m}
\right)
\dfrac{1}{\widetilde{m}^2 }
\underbrace{
\mathbb{E}_{\widetilde{y}}
\left(
\left|
\sum_{i=1}^{\widetilde{m}}
g(\widetilde{y}^i) \widetilde{L}_k(\widetilde{y}^i)
\right|^2
\right)
}_{I}
+ \dfrac{1}{m} 
\dfrac{1}{\widetilde{m} }
\underbrace{
\mathbb{E}_{\widetilde{y}}
\left( 
\sum_{i=1}^{\widetilde{m}}
w(\widetilde{y}^i) 
|g(\widetilde{y}^i) \widetilde{L}_k(\widetilde{y}^i)|^2      
\right)
}_{II}
.  
\end{align*}

Summing term I over $k$ gives 
\begin{align*}
\sum_{k=1}^n \mathbb{E}_{\widetilde{y}}\left( \left| \sum_{i=1}^{\widetilde{m}} g(\widetilde{y}^i) \widetilde{L}_k(\widetilde{y}^i)\right|^2 \right) 
&= 
\underbrace{
\sum_{k=1}^n
\sum_{i=1}^{\widetilde{m}}
\sum_{j=1 \atop j\neq i}^{\widetilde{m}}
\mathbb{E}_{\widetilde{y}}
\left( 
g(\widetilde{y}^i) \widetilde{L}_k(\widetilde{y}^i)
g(\widetilde{y}^j) \widetilde{L}_k(\widetilde{y}^j)      
\right) 
}_{III}
+ 
\underbrace{
\sum_{k=1}^n
\sum_{i=1}^{\widetilde{m}}
\mathbb{E}_{\widetilde{y}}
\left( 
|g(\widetilde{y}^i) \widetilde{L}_k(\widetilde{y}^i)|^2      
\right).  
}_{IV}
\end{align*}

We now show that Term III is equal to zero.  
On the event $\evtrib$ for any $k=1,\ldots,n$ and any $\widetilde{y}^1,\ldots,\widetilde{y}^{\widetilde{m}}$ it holds that $\widetilde{L}_k \in V_n $, and therefore 
\begin{align*}
III
= &
\sum_{k=1}^n
\sum_{i=1}^{\widetilde{m}}
\sum_{j=1 \atop j\neq i}^{\widetilde{m}}
\mathbb{E}_{\widetilde{y}^{\ell} \, : \, 
\ell \in \{ 1,\ldots,\widetilde{m} \} \setminus \{i, j \}
}
\left(
\mathbb{E}_{\widetilde{y}^{i}} 
%\mathbb{E}_{\widetilde{y}^{i}, \widetilde{y}^{j} }
\left(
g(\widetilde{y}^i)
\underbrace{
\mathbb{E}_{ \widetilde{y}^{j} }
\left(
 \widetilde{L}_k(\widetilde{y}^i)
g(\widetilde{y}^j) \widetilde{L}_k(\widetilde{y}^j)      
\right)
}_{
=:\overline{L}_k(\widetilde{y}^{i})
}
\right)
\right)
\\
= &
\sum_{k=1}^n
\sum_{i=1}^{\widetilde{m}}
\sum_{j=1 \atop j\neq i}^{\widetilde{m}}
\mathbb{E}_{\widetilde{y}^{\ell} \, : \, 
\ell \in \{ 1,\ldots,\widetilde{m} \} \setminus \{i, j \}
}
\left( 
\int_{\dom}
g(\widetilde{y}^i) 
\overline{L}_k(\widetilde{y}^{i})
\, d\mu(\widetilde{y}^i)
\right),
\end{align*}
where the function 
$\overline{L}_k=\overline{L}_k(\widetilde{y}^i)=
\mathbb{E}_{\widetilde{y}^{j}}(\widetilde{L}_k(\widetilde{y}^i)
g(\widetilde{y}^j) \widetilde{L}_k(\widetilde{y}^j))
$
is obtained as an average over $\widetilde{y}^j$ of functions 
in $V_n$, \emph{i.e.}~$\widetilde{L}_k(\widetilde{y}^i)$, multiplied by 
real-valued random variables, \emph{i.e.}~$g(\widetilde{y}^j)\widetilde{L}_k(\widetilde{y}^j)$. 
Therefore $\overline{L}_k$ does not depend on $\widetilde{y}^j$ and $\overline{L}_k \in V_n$.
Hence for any $k=1,\ldots,n$ the integral in the last line vanishes because $\overline{L}_k$ is orthogonal to $g$.  

For term IV, from Lemma~\ref{lemma_bound_gamma} we obtain
\begin{align*}
IV
\leq
\| g\|_{L^\infty(\dom)}^2
 \sum_{k=1}^n
\mathbb{E}_{\widetilde{y}}
\left(
\sum_{i=1}^{\widetilde{m}}
 | \widetilde{L}_k(\widetilde{y}^i)|^2      
\right)
=
 \widetilde{m} 
\| g\|_{L^\infty(\dom)}^2
 \sum_{k=1}^n
\mathbb{E}_{\widetilde{y}}
\left(
 \| \widetilde{L}_k
\|^2_{\widetilde{m}}      
\right)
\leq 
n
 \widetilde{m} 
(1+\ortopar)
\| g\|_{L^\infty(\dom)}^2. 
\end{align*}
Summing term II over $k$ and using Lemma~\ref{lemma_bound_gamma} gives 
\begin{align*}
\sum_{k=1}^n
\mathbb{E}_{\widetilde{y}}
\left( 
\sum_{i=1}^{\widetilde{m}}
w(\widetilde{y}^i) 
|g(\widetilde{y}^i) \widetilde{L}_k(\widetilde{y}^i)|^2      
\right)
& =
\mathbb{E}_{\widetilde{y}}
\left( 
\sum_{i=1}^{\widetilde{m}}
w(\widetilde{y}^i) 
|g(\widetilde{y}^i)|^2 
\sum_{k=1}^n|
\widetilde{L}_k(\widetilde{y}^i)|^2      
\right)
\\
& =
\gamma 
\mathbb{E}_{\widetilde{y}}
\left( 
\sum_{i=1}^{\widetilde{m}}
|g(\widetilde{y}^i)|^2 
\right)
\\
& =
\gamma
\widetilde{m}
\int_{\dom}
|g(y)|^2 
\, d\mu(y)
\\
& \leq
n(1+\ortopar)
\widetilde{m}
\|g \|^2. 
\end{align*}
Finally 
$$
\sum_{k=1}^n
\mathbb{E}\left( | \langle g,\widetilde{L}_k \rangle_m  |^2   \right)  
\leq 
(1+\ortopar)
\left( 
\dfrac{
n
}{  \widetilde{m} }
 \|g \|_{L^\infty(\dom)}^2
+ \dfrac{n}{m}
 \|g \|^2
\right),  
$$
and combining with \ref{itemfirst}) and \ref{itemthird}) gives \eqref{eq:second_result}. 
\end{proof}

\subsection{Construction of $\widetilde{L}_1,\ldots,\widetilde{L}_n$ with QR factorisation}
\label{sec:const_basis}
In this section we use Householder QR factorisation (hereafter HQRf) for the construction of $\widetilde{L}_1,\ldots,\widetilde{L}_n$.   
Let $\auxbas_1,\ldots,\auxbas_n \in V_n $ be $n$ linearly independent functions. 
Using the $\widetilde{m}$ random samples in \eqref{eq:disc_scal_prod}, we introduce the matrix $W\in \mathbb{R}^{\widetilde{m}\times n}$ defined component-wise as $W_{jk}:=\auxbas_{k}(\widetilde{y}^j)$ for $j=1,\ldots,\widetilde{m}$ and $k=1,\ldots,n$.

Recall the following result on HQRf, see \emph{e.g.}~\cite[Theorem 4.24]{S}: if $W$ has full rank, then it can be written uniquely in the form $W=QR$, where the columns of $Q\in \mathbb{R}^{\widetilde{m}\times n}$ form an orthonormal basis of the column space of $W$, and $R\in \mathbb{R}^{n \times n}$ is an upper triangular matrix with positive diagonal elements.  
Hence we can take
\begin{equation}
\label{def_el_q}
\widetilde{L}_k(\widetilde{y}^j)=\sqrt{\widetilde{m}} Q_{jk}, \quad j=1,\ldots,\widetilde{m}, \quad k=1,\ldots,n, 
\end{equation} 
and the factor $\sqrt{\widetilde{m}}$ makes the $\widetilde{L}_k$ orthonormal with \eqref{eq:disc_scal_prod}, while the columns of $Q$ are orthonormal with the Euclidean scalar product in $\mathbb{R}^{\widetilde{m}}$. 
For any $k=1,\ldots,n$ the analytic expression of $\widetilde{L}_k$ is given as a linear combination of $\auxbas_1,\ldots,\auxbas_k$ by 
\begin{equation}
\label{eq:def_basis_inv_R}
\widetilde{L}_k(y) := \sum_{j=1}^k \ell_j^k \auxbas_j(y), \quad y \in \dom, 
\end{equation}    
where for any $k=1,\ldots,n$ the vector $(\ell_1^k,\ldots,\ell_n^k)^\top \in \mathbb{R}^n$ is the solution to the linear system 
\begin{equation}
\label{eq:def_basis_inv_R_sys}
R^\top (\ell_1^k,\ldots,\ell_n^k)^\top = e^k		 
\end{equation}    
and $(e^k)_{k=1,\ldots,n}$
is the standard basis of $\mathbb{R}^n$, \emph{i.e.}~$e^k:=(e^k_1,\ldots,e^k_n)^\top \in \mathbb{R}^n$ and 
$e^k_j:=\delta_{jk}$ for any $j,k=1,\ldots,n$. 

The result above shows that if $\textrm{rank}(W)=n$ then the $\widetilde{L}_1,\ldots,\widetilde{L}_n$ constructed by 
\eqref{eq:def_basis_inv_R} satisfy P2. 
Conversely, if $\textrm{rank}(W)<n$ then the linear system \eqref{eq:def_basis_inv_R_sys} is singular, and P2 does not hold. 
Depending on the space $V_n$ and on the localisation of the supports of $\auxbas_1,\ldots,\auxbas_n$, two situations can occur:  
\begin{itemize}
\item $\auxbas_1,\ldots,\auxbas_n$ are \emph{globally supported} functions  on $\dom$. 
When $V_n$ is a multivariate polynomial space $V_n=V_\Lambda:=\textrm{span}\{ y^\nu  \, : \, \nu \in \Lambda, y \in \dom \}$ supported on a downward closed index set $\Lambda \subset \mathbb{N}^d_0$ with $n=\#(\Lambda)$, one can choose $\auxbas_1,\ldots,\auxbas_n$ as the tensorized monomial basis. 
In one dimension, whenever more than $n$ over $\widetilde{m}$ samples are distinct, the Vandermonde matrix $W$ has full rank. The same holds in higher dimension, but requiring that at least $n$ over $\widetilde{m}$ samples do not fall on any polynomial surface supported on $\Lambda$. 
In both cases, the probability that $\textrm{rank}(W)<n$ is formally zero, and also completely negligible when considering the numerical rank of $W$, since from \ref{itemfirst}) $\widetilde{m}$ is of the order $K_n \ln n \geq n \ln n$.

\item $\auxbas_1,\ldots,\auxbas_n$ are \emph{locally supported} functions  on $\dom$. In this case, the matrix $W$ is rank deficient whenever $\exists j \in [1,\ldots,n] \, : \, \textrm{supp}(\auxbas_j) \cap \{\widetilde{y}^1,\ldots,\widetilde{y}^{\widetilde{m}}\} = \emptyset$. The probability of such events is not zero, and can be calculated as a function of the size of $\textrm{supp}(\auxbas_j)$.  
Moreover, it might not be small if some of the $\auxbas_j$ have very localized support and $d$ is large. 
\end{itemize}

We now show that $\widetilde{L}_1,\ldots,\widetilde{L}_n$ in \eqref{def_el_q} satisfy P1 with $\ortopar$ not exceeding $\epsilon_M \widetilde{m} n^{3/2}$, 
where $\epsilon_M\approx 10^{-16}$ is the machine precision. 
From  \eqref{def_el_q} we obtain 
\begin{align}
\label{eq:error_qr_intro_combine}
\sum_{j,k=1}^n 
|\langle \widetilde{L}_j , \widetilde{L}_k \rangle_{\widetilde{m}} - \delta_{jk}|^2
=
\sum_{j,k=1}^n \left| \dfrac{1}{\widetilde{m}} \sum_{i=1}^{\widetilde{m}} \widetilde{L}_j(\widetilde{y}^i)  \widetilde{L}_k(\widetilde{y}^i) - \delta_{jk} \right|^2
= 
\sum_{j,k=1}^n \left| \sum_{i=1}^{\widetilde{m}} Q_{ij}Q_{ik} - \delta_{jk} \right|^2
=   
\| Q^\top Q - I \|_F^2,     
\end{align}
showing that $\ortopar$-orthonormality of the $\widetilde{L}_1,\ldots,\widetilde{L}_n$ is related to the loss of orthogonality of the matrix $Q$ due to numerical cancellation. 
The right-hand side in \eqref{eq:error_qr_intro_combine} can be estimated using classical results on backward error analysis for HQRf, like \cite[Theorem 1.5]{S} or \cite[Theorem 19.4]{Higham}.  
Using such results (see \emph{e.g.}~\cite[page 266]{S}) upper bounds for the orthogonality error of $Q$ take the form 
\begin{align}
\label{eq:error_qr_intro}  
\| Q^\top Q - I \|_F \leq 2\sqrt{n} \varphi(n,\widetilde{m}) \epsilon_M,  
\end{align}
where $\varphi=\varphi(n,\widetilde{m})$ is a slowly growing function of $n$ and $\widetilde{m}$. 
In particular \cite[Theorem 19.4]{Higham} shows that $\varphi(n,\widetilde{m}) \epsilon_M = c n \widetilde{m} \epsilon_M ( 1 - c n \widetilde{m} \epsilon_M )^{-1}$ with $c$ being a small numerical constant depending on the floating-point arithmetic.  
Hence $\| Q^\top Q - I \|_F \lesssim \epsilon_M \widetilde{m} n^{3/2}$ from \eqref{eq:error_qr_intro}, and thanks to \eqref{eq:error_qr_intro_combine} the $\widetilde{L}_1,\ldots,\widetilde{L}_n$ constructed by \eqref{def_el_q}--\eqref{eq:def_basis_inv_R} are provably $\ortopar$-orthonormal with $\ortopar\approx \epsilon_M \widetilde{m} n^{3/2}$. 

We now discuss the robustness of the construction of $\widetilde{L}_k$ to ill-conditioning of $W$. 
The matrix $W$ can be ill-conditioned, depending on the chosen basis $\auxbas_1,\ldots,\auxbas_n$ for the given domain $\dom$.   
As a remarkable property of HQRf, the error bound \eqref{eq:error_qr_intro} does not depend on $\kappa(W)$, ensuring $\ortopar$-orthonormality of $\widetilde{L}_1,\ldots,\widetilde{L}_n$ from \eqref{def_el_q} despite the ill-conditioning of $W$. 
The matrix $R$ inherits the same ill-conditioning of $W$, because $Q^\top Q \approx I$ and therefore $\kappa(W) \approx \kappa(R)$. 
Nonetheless, the linear system with matrix $R^\top$ in \eqref{eq:def_basis_inv_R_sys} can be solved with high accuracy by forward substitution, see \cite{H89}. 
Hence both P1 and P2 can be ensured also when $W$ 
is ill-conditioned.

The following corollary of Theorem~\ref{main_theo} is an immediate consequence of the above results on QR factorisation.

\begin{corollary} 
\label{coro_theo_main}
Given $\auxbas_1,\ldots,\auxbas_n \in V_n$ linearly independent, 
and given $\widetilde{y}^1,\ldots,\widetilde{y}^{\widetilde{m}}$ as in Theorem~\ref{main_theo}, let $W\in\mathbb{R}^{\widetilde{m}\times n}$ be the matrix with components $W_{ij}=\auxbas_j(\widetilde{y}^i)$, and let $\widetilde{L}_1,\ldots,\widetilde{L}_n$ be constructed from $QR=W$, the Householder QR factorisation of $W$. 
Under the same assumptions of Theorem~\ref{main_theo} 
but with item \ref{itemfifth}) replaced by 
\begin{center}
v bis) $\Pr\left( \{ \textrm{rank}(W)=n  \} \cap \{ \| Q^\top Q - I \|_F \leq \ortopar \} \right) \geq 1- \conf$,
\end{center}
the conclusions of Theorem~\ref{main_theo} in item I) and  item II) hold true. 
\end{corollary}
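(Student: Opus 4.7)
The plan is to reduce Corollary~\ref{coro_theo_main} directly to Theorem~\ref{main_theo} by verifying that hypothesis v~bis) is stronger than hypothesis v), namely that the event
\begin{equation*}
\{\mathrm{rank}(W)=n\} \cap \{\|Q^\top Q - I\|_F \leq \ortopar\}
\end{equation*}
is contained in $\evquad \cap \evrank$. Since hypotheses i)--iv) are identical in both statements, this containment is all that is needed: the conclusions I) and II) of Theorem~\ref{main_theo} then transfer verbatim.

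To obtain the inclusion into $\evrank$, I would use the fact that on $\{\mathrm{rank}(W)=n\}$ the Householder QR factorisation returns a unique upper triangular $R$ with strictly positive diagonal, so $R$ and $R^\top$ are invertible. The systems \eqref{eq:def_basis_inv_R_sys} then admit unique solutions, and the matrix of coefficients $(\ell^k_j)$ used in \eqref{eq:def_basis_inv_R} is exactly $R^{-\top}$. Consequently the map from $(\auxbas_j)_{j=1}^n$ to $(\widetilde{L}_k)_{k=1}^n$ is a linear isomorphism; since by hypothesis the $\auxbas_j$ are linearly independent and thus span $V_n$, the $\widetilde{L}_k$ do so as well, yielding $\evrank$.

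To obtain the inclusion into $\evquad$, I would invoke the algebraic identity \eqref{eq:error_qr_intro_combine} already derived in Section~\ref{sec:const_basis}, which via the definition \eqref{def_el_q} of $\widetilde{L}_k$ on the samples gives
\begin{equation*}
\sum_{j,k=1}^n \bigl|\langle \widetilde{L}_j,\widetilde{L}_k\rangle_{\widetilde{m}} - \delta_{jk}\bigr|^2 \;=\; \|Q^\top Q - I\|_F^2.
\end{equation*}
Hence $\{\|Q^\top Q - I\|_F \leq \ortopar\}$ coincides exactly with $\evquad$.

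Putting the two inclusions together, v~bis) yields $\Pr(\evquad \cap \evrank) \geq 1 - \conf$, i.e.\ hypothesis v) of Theorem~\ref{main_theo}, and the conclusions then follow from that theorem without further work. I do not foresee any real obstacle: the entire argument is a bookkeeping check that the QR-based construction of Section~\ref{sec:const_basis} realises prescriptions P1 and P2 in the abstract form required by Theorem~\ref{main_theo}, the only nontrivial ingredients being the invertibility of $R$ under the full-rank event and the identity \eqref{eq:error_qr_intro_combine}, both already at hand.
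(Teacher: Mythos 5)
Your proposal is correct and follows essentially the same route as the paper, which presents the corollary as an immediate consequence of the Section~\ref{sec:const_basis} results: the identity \eqref{eq:error_qr_intro_combine} identifies $\{\|Q^\top Q - I\|_F \leq \ortopar\}$ with $\evquad$, and the full-rank condition on $W$ guarantees P2, i.e.\ $\evrank$, so that v~bis) implies v) and Theorem~\ref{main_theo} applies directly. Your write-up merely makes explicit the bookkeeping the paper leaves implicit.
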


For given $\widetilde{y}^1,\ldots,\widetilde{y}^{\widetilde{m}}$ and $\auxbas_1,\ldots,\auxbas_n$ the event 
$\{ \textrm{rank}(W)=n  \} \cap \{ \| Q^\top Q - I \|_F \leq \ortopar \}$ 
in 
Corollary~\ref{coro_theo_main} can be checked if true or false, and thus its probability $1-\conf$ can be numerically estimated from the matrices $W$ and $Q$.   
If $\ortopar\approx \epsilon_M \widetilde{m} n^{3/2}$ then the inclusion $\{ \textrm{rank}(W)=n  \}  \subseteq \evquad \cap \evrank$ holds, and it is sufficient to check only the rank of $W$.   
If $V_n$ is a multivariate polynomial space and $\ortopar\approx \epsilon_M m n^{3/2}$ then $\conf=0$.

Before closing the section, we discuss the choice of the functions $\auxbas_1,\ldots,\auxbas_n$, that plays an important role in the numerical stability of the algorithm. 
The components $\ell_1^k,\ldots,\ell_n^k$ of the solution to \eqref{eq:def_basis_inv_R_sys} satisfy  
\begin{equation}
\label{eq:upp_bound_coef}
\ell_j^k 
= 
(R^{-T})_{kj}
=
(R^{-1})_{jk}
, 
\qquad j,k=1,\ldots,n,
\end{equation}
and might attain large values, \emph{e.g.}~due to possible bad scaling of the diagonal elements of $R$. 
Large values of $\ell_1^k,\ldots,\ell_n^k$ in \eqref{eq:def_basis_inv_R} reflect a poor choice of $\auxbas_1,\ldots,\auxbas_n$ to represent $\widetilde{L}_1,\ldots,\widetilde{L}_n$ on the given domain $\dom$. 
Indeed $R$ can always be made sufficiently 
close to the identity matrix 
if $\auxbas_1,\ldots,\auxbas_n$ are chosen sufficiently close (in the $L^2(\dom,\mu)$ sense) to 
$L_1,\ldots,L_n$.   
Unfortunately $L_1,\ldots,L_n$ are unknown if $\dom$ is irregular. 
In absence of a priori information on $L_1,\ldots,L_n$, we now show how to ensure that the $|\ell_j^k|$ in \eqref{eq:def_basis_inv_R} are not too large, by adapting $\auxbas_1,\ldots,\auxbas_n$ to the given domain $\dom$. 
To this aim, in Section~\ref{sec:adapting} we propose an algorithm that first rescales each $\auxbas_j$ as $\widetilde{\auxbas}_j:= \multipli_{j,\dom} \auxbas_j$, where the factor $\multipli_{j,\dom}>0$ depends on the domain $\dom$, and then computes the HQR factorisation $\widetilde{Q}\widetilde{R}=\widetilde{W}$ of the matrix $\widetilde{W}\in \mathbb{R}^{\widetilde{m}\times n}$ with components $\widetilde{W}_{ij}=\widetilde{\auxbas}_j(\widetilde{y}^i)$. 
The crucial point is that the algorithm choses $\multipli_{j,\dom}$ in such a way that $\widetilde{R}$ has all unitary diagonal elements. 
Using $\widetilde{R}$ the $\widetilde{L}_1,\ldots,\widetilde{L}_n$ can be obtained as
\begin{equation}
\label{eq:def_basis_inv_R_tilde}
\widetilde{L}_k(y) := \sum_{j=1}^k \widetilde{\ell}_j^k \widetilde{\auxbas}_j(y), \quad y \in \dom, 
\end{equation}    
by solving the linear system 
\begin{equation}
\label{eq:def_basis_inv_R_sys_prec_tilde}
\widetilde{R}^\top (\widetilde{\ell}_1^k,\ldots,\widetilde{\ell}_n^k)^\top = e^k
\end{equation}    
with forward substitution for any $k=1,\ldots,n$. 
Denote by $N\in \mathbb{R}^{n\times n}$ the upper triangular part of $\widetilde{R}$, that is a nilpotent matrix of index $n$.  
Thanks to the structure of $\widetilde{R}=I+N$, 
using Neumann series we can write $\widetilde{R}^{-1}=I + \sum_{s=1}^{n-1} (-1)^s N^s$. 
From \eqref{eq:def_basis_inv_R_sys_prec_tilde}
it holds $\widetilde{\ell}_j^k
=(\widetilde{R}^{-1})_{jk}$
for all $j,k=1,\ldots,n$.      
Therefore the coefficients $\widetilde{\ell}_j^k$
satisfy the safer 
bounds 
\begin{equation}
\label{better_bound_stab}
\widetilde{\ell}_j^k = 1 \textrm{ if } j=k,  
\qquad 
\widetilde{\ell}_j^k = 0 \textrm{ if } j>k, 
\qquad  
\max_{j,k=1,\ldots, n 
\atop 
j<k
}
|\widetilde{\ell}_j^k|= 
 \left\|  \sum_{s=1}^{n-1} (-1)^s N^s \right\|_{\max},
\end{equation}
that do not depend on the scaling of the diagonal elements of $R$. 
In practice the right hand-side of \eqref{better_bound_stab} exhibits only a slow growth w.r.t.~$n$ 
thanks to the alternating sign in the summation and to $N$ being nilpotent.
Therefore $N^s$ has at most $(n-s)^2/2$ nonzero components for any $s=1,\ldots,n$. 
The algorithmic construction of the $\multipli_{j,\dom}$ is discussed in Section~\ref{sec:adapting}. It uses HQRf of suitable incremental updates of the matrix $W$.
Notice that each $\widetilde{\auxbas}_j$ is obtained by rescaling $\auxbas_j$, and therefore $\textrm{rank}(W)=\textrm{rank}(\widetilde{W})$. 

In Section~\ref{sec:numerical_section} we describe two numerical algorithms that compute the estimator $\fun_T$, and their implementation.  
Both algorithms obey to the theoretical guarantees of Corollary~\ref{coro_theo_main}. 
The difference between the two algorithms is in the computation of $\widetilde{L}_1,\ldots,\widetilde{L}_n$. 
The first algorithm computes $\widetilde{L}_1,\ldots,\widetilde{L}_n$ from \eqref{eq:def_basis_inv_R} by solving 
\eqref{eq:def_basis_inv_R_sys},
directly using any chosen $\auxbas_1,\ldots,\auxbas_n$.  
The second algorithm computes $\widetilde{L}_1,\ldots,\widetilde{L}_n$ from \eqref{eq:def_basis_inv_R_tilde} by solving \eqref{eq:def_basis_inv_R_sys_prec_tilde}, 
adapting the chosen $\auxbas_1,\ldots,\auxbas_n$ to the domain $\dom$.
Both algorithms rely on the HQRf of $\widetilde{m}$-by-$n$ matrices whose cost is proportional to $\widetilde{m}n^2$. The second algorithm is numerically more stable thanks to \eqref{better_bound_stab}, but also computationally more demanding.

\section{Description of the algorithms}
\label{sec:numerical_section}
This section describes the numerical algorithms and their implementation.  
We start by describing the first algorithm.  
Given the domain $\dom$, the function $\fun$, the space $V_n$, the linearly independent functions $\auxbas_1,\ldots,\auxbas_n \in V_n$, 
the threshold $\ortopar$ and the bound $\bou$, the main tasks for the approximation of $\fun$ by the weighted least-squares estimator $\fun_T$ are the following, in the same sequential order:  

\begin{framed}
\paragraph{Algorithm 1:} computes the estimator $\fun_T$ using the given $\auxbas_1,\ldots,\auxbas_n$.
\begin{enumerate}
[leftmargin=1cm]
\item[Step  1:] generate $\widetilde{m}$ random samples $\widetilde{y}^1,\ldots,\widetilde{y}^{\widetilde{m}} \stackrel{\textrm{iid}}{\sim} \mu$;
\item[Step 2:] construct the matrix $W\in \mathbb{R}^{\widetilde{m}\times n}$ with components $W_{jk}:=\auxbas_k(\widetilde{y}^{j})$;   
\item[Test 1:] \textbf{IF} $\textrm{rank}(W) < n$ \textbf{THEN} set $\fun_T \equiv 0$ and goto Step 9; \textbf{ELSE} continue; 
\item[Step 3:] rescale all the columns of $W$ such that $\| \auxbas_k \|_{\widetilde{m}}=1$ (and keep track of the scaling factors);  
\item[Step 4:] compute $QR=W$, the Householder QR factorisation of $W$; 
\item[Test 2:] \textbf{IF} 
$\| Q^\top Q - I  \|_F > \ortopar$
\textbf{THEN} set $\fun_T \equiv 0$ and goto Step 9; \textbf{ELSE} continue; 
\item[Step 5:] construct $\widetilde{L}_1,\ldots,\widetilde{L}_n$ from \eqref{eq:def_basis_inv_R} by solving the linear system 
\eqref{eq:def_basis_inv_R_sys}; 
\item[Step 6:] generate $m$ random samples $y^1,\ldots,y^{m} \stackrel{\textrm{iid}}{\sim} \widetilde{\sigma}_n$;
\item[Step 7:] evaluate $\fun(y^1),\ldots,\fun(y^m)$;  
\item[Step 8:] compute the estimator $\fun_W$ of $\fun$ by solving the normal equations and set $\fun_T=T_\bou \circ \fun_W$;  
\item[Step 9:] return $\fun_T$.  
\end{enumerate}
\end{framed}

The algorithms for the generation of the random samples at Steps 1 and 6 are presented in Section~\ref{sec:gen_samples}. 
The algorithm that computes the $\widetilde{L}_k$ at Steps 2, 3, 4 and 5 is discussed in Section~\ref{sec:const_basis}. 
The construction of the normal equations at Step 8 is described in Section~\ref{sec:computation_LS}. 
The main purpose of Test 1 and Test 2 is to avoid wasting computational resources at the following steps, and in particular at Step 7. 
We now discuss the failure probabilities of each test. 
The failure probability of Test 1 depends on the localisation properties of the supports of $\auxbas_1,\ldots,\auxbas_n$, as discussed in Section~\ref{sec:const_basis}. 
Whenever Test 1 fails, one can restart the algorithm from Step 1 with the same $\auxbas_1,\ldots,\auxbas_n$ or with a different choice.  
Concerning Test 2, the analysis of the orthogonality error in Section~\ref{sec:const_basis} shows that, if $\textrm{rank}(W)=n$ and $\ortopar\approx \epsilon_M\widetilde{m}n^{3/2}$, then the failure probability of Test 2 is zero. 
This condition is only sufficient: for example in all the numerical tests in Section~\ref{sec:numerics} 
the failure probability is zero  
with $\ortopar =10^{-12}$.

The second algorithm is the following Algorithm 2. It is similar to Algorithm 1, and the differences are in the computation of $\widetilde{L}_1,\ldots,\widetilde{L}_n$ at Steps 3, 4 and 5. The algorithm ADAPT at Step 3 
performs several orthonormalisation sweeps combined with suitable rescaling of the columns of $W$,     
as described in Section~\ref{sec:adapting}.   
At Step 8, the construction of the normal equations again follows Section~\ref{sec:computation_LS} but using the QR factorisation $\widetilde{Q}\widetilde{R}=\widetilde{W}$ of the matrix $\widetilde{W}$. 

\begin{framed}
\paragraph{Algorithm 2:} computes the estimator $\fun_T$ adapting the given $\auxbas_1,\ldots,\auxbas_n$ to $\dom$.
\begin{enumerate}
[leftmargin=1cm]
\item[Step  1:] generate $\widetilde{m}$ random samples $\widetilde{y}^1,\ldots,\widetilde{y}^{\widetilde{m}} \stackrel{\textrm{iid}}{\sim} \mu$;
\item[Step 2:] construct the matrix $W\in \mathbb{R}^{\widetilde{m}\times n}$ with components $W_{jk}:=\auxbas_k(\widetilde{y}^{j})$;   
\item[Test 1:] \textbf{IF} $\textrm{rank}(W) < n$ \textbf{THEN} set $\fun_T \equiv 0$ and goto Step 9; \textbf{ELSE} continue; 
\item[Step 3:] compute the matrix $\widetilde{W}=\textrm{ADAPT}(W)$;  
\item[Step 4:] compute $\widetilde{Q}\widetilde{R}=\widetilde{W}$, the Householder QR factorisation of $\widetilde{W}$; 
\item[Test 2:] \textbf{IF} 
$\| \widetilde{Q}^\top \widetilde{Q} - I  \|_F > \ortopar$
\textbf{THEN} set $\fun_T \equiv 0$ and goto Step 9; \textbf{ELSE} continue; 
\item[Step 5:] construct $\widetilde{L}_1,\ldots,\widetilde{L}_n$ from \eqref{eq:def_basis_inv_R_tilde} by solving the linear system \eqref{eq:def_basis_inv_R_sys_prec_tilde}; 
\item[Step 6:] generate $m$ random samples $y^1,\ldots,y^{m} \stackrel{\textrm{iid}}{\sim} \widetilde{\sigma}_n$;
\item[Step 7:] evaluate $\fun(y^1),\ldots,\fun(y^m)$;  
\item[Step 8:] compute the estimator $\fun_W$ of $\fun$ by solving the normal equations and set $\fun_T=T_\bou \circ \fun_W$;  
\item[Step 9:] return $\fun_T$.  
\end{enumerate}
\end{framed}

\subsection{Generation of the random samples}
\label{sec:gen_samples}
The following sampling algorithms can be used, see \emph{e.g.}~\cite{De}. 
Independent random samples from $\mu$ on $\dom\subseteq \alldom=[-1,1]^d$ can be generated by \emph{rejection sampling}. First step: draw iid random samples $\widetilde{y}^1,\widetilde{y}^{2},\ldots$ from $\mu(\alldom)$, the uniform probability measure on $\alldom$. Second step: accept any random sample $\widetilde{y}^i$ drawn at the first step as a random sample from $\mu(\dom)$ whenever $\widetilde{y}^i\in \dom$, and reject it otherwise. On average, the number of accepted random samples is proportional to $\lambda(\dom)/\lambda(\alldom)$, where $\lambda(\cdot)$ denotes the Lebesgue measure. When $\lambda(\dom)$ is small compared to $\lambda(\alldom)=2^d$, or when $d$ is large, the algorithm above suffers from the curse of dimensionality. 
For less general domains $\dom$, \emph{e.g.}~polytopes or convex bodies, 
alternative MCMC sampling algorithms like \emph{hit and run} or \emph{random walk} can be used.

Independent random samples $y^1,\ldots,y^m$ from the discrete distribution $\widetilde{\sigma}_n$ can be generated, for example, by \emph{inverse transform sampling}. In this case, the computational cost for drawing one sample from $\widetilde{\sigma}_n$ is $\mathcal{O}(\ln(\widetilde{m}))$ when using \emph{binary search}, or $\mathcal{O}(1)$ when using \emph{the alias method}, that however requires an additional cost for the preparation of the hash table.   

\subsection{Adapting $\auxbas_1,\ldots,\auxbas_n$ to the domain $\dom$}
\label{sec:adapting}
The algorithm ADAPT takes as input $W\in \mathbb{R}^{\widetilde{m}\times n}$ with components $W_{ij}=\auxbas_j(\widetilde{y}^i)$ and produces as output $\widetilde{W}\in\mathbb{R}^{\widetilde{m}\times n}$ with components $\widetilde{W}_{ij}=\widetilde{\auxbas}_j(\widetilde{y}^i)$ such that the matrix $\widetilde{R}$ in the Householder QR factorisation $\widetilde{Q}\widetilde{R}=\widetilde{W}$ of $\widetilde{W}$ has unitary diagonal elements. 
Each $\widetilde{\auxbas}_j$ is constructed as $\widetilde{\auxbas}_j=\multipli_{j,\dom} \, \auxbas_j$ rescaling $\auxbas_j$ by a factor $\multipli_{j,\dom}>0$ that depends on $\dom$. At the first iteration, with $j=1$, $\widetilde{W}$ is initialized as the first column of $W$ renormalized.  
At iteration $j=2,\ldots,n$, the algorithm creates an auxiliary matrix $Z\in \mathbb{R}^{\widetilde{m} \times j}$ by juxtaposition of $\widetilde{W}\in \mathbb{R}^{\widetilde{m} \times (j-1)}$ with the $j$th renormalised column of $W$. 
Then the QR factorisation of $Z$ is computed. 
Finally, the matrix $\widetilde{W}$ is updated again by juxtaposition of $\widetilde{W}$ with the $j$th column of $W$ but this time rescaled by an appropriately chosen factor that produces $\widetilde{R}_{jj}=1$ in the matrix $\widetilde{R}$ such that $\widetilde{Q}\widetilde{R}=\widetilde{W}$. 
Notice that the rescaling operation when multiplying $\auxbas_j$ by $\multipli_{j,\dom}$ corresponds to a simple renormalisation of $\auxbas_j$ in $\ell^2$ only when $j=1$, 
due to the additional term $| \widetilde{R}_{jj} |^{-1}$ when $j\geq 2$.    
For convenience, in the description of the algorithm we denote by $W(:,j)$ the $j$th column of $W$, and we denote by $[A|b]\in\mathbb{R}^{\widetilde{m}\times(k+1)}$ the juxtaposition of any matrix $A \in \mathbb{R}^{\widetilde{m}\times k}$ with any vector $b\in\mathbb{R}^{\widetilde{m}}$. 

\begin{algorithm}
\caption{Computes $\widetilde{W}$ such that $\widetilde{W} =\widetilde{Q}\widetilde{R}$ and $\widetilde{R}_{jj}=1$ for all $j=1,\ldots,n$.}
\begin{algorithmic} 
\REQUIRE $W$
\ENSURE $\widetilde{W}$
\STATE $\multipli_{1,\dom} \leftarrow  \| W(:,1) \|_{\ell^2(\mathbb{R}^{\widetilde{m}})}^{-1}$              
\STATE $\widetilde{W}  \leftarrow \multipli_{1,\dom} W(:,1)$      
\FOR{$j = 2,\ldots,n$}
\STATE $\multipli_{j,\dom} \leftarrow \| W(:,j) \|_{\ell^2(\mathbb{R}^{\widetilde{m}})}^{-1}$
\STATE $Z \leftarrow [ \widetilde{W} \, | \,  \multipli_{j,\dom}W(:,j) ]$
\STATE $[\widetilde{Q},\widetilde{R}]=qr(Z)$
\STATE $\multipli_{j,\dom} \leftarrow \multipli_{j,\dom}  | \widetilde{R}_{jj} |^{-1}$
\STATE $\widetilde{W} \leftarrow [\widetilde{W} \, | \, \multipli_{j,\dom}W(:,j)]$
\ENDFOR
\end{algorithmic}
\end{algorithm}

\subsection{Computation of the weighted least-squares estimator}
\label{sec:computation_LS}
The estimator $\fun_W$ can be calculated by solving the normal equations \eqref{eq:def_norm_eq}. 
The matrix $\widetilde{G}$ can be rewritten as $\widetilde{G}=D^\top D/m$, where $D\in \mathbb{R}^{m\times n}$ is a matrix obtained by subsampling and reweighting the rows of the matrix $Q$ introduced in Section~\ref{sec:const_basis}, as we now describe.  
After sampling the $y^1,\ldots,y^m$ among the $\widetilde{y}^1,\ldots,\widetilde{y}^{\widetilde{m}}$, we can build a deterministic function $\mathcal{S}:[1,\ldots,m]\to [1,\ldots,\widetilde{m}]$ such that $y^i=\widetilde{y}^{\mathcal{S}(i)}$ for any $i=1,\ldots,m$. 
Using the function $\mathcal{S}$ and \eqref{def_el_q} we can build $D$ as
\begin{align*}
D_{ij} 
= 
\sqrt{w(\widetilde{y}^{\mathcal{S}(i)})} \widetilde{L}_j(\widetilde{y}^{\mathcal{S}(i)}) 
= 
\sqrt{\dfrac{\sum_{\ell=1}^{\widetilde{m}} \sum_{k=1}^n Q^2_{\ell k}}{  \sum_{k=1}^n Q^2_{\mathcal{S}(i),k}}} Q_{\mathcal{S}(i),j}, 
\quad i=1,\ldots,m, \quad j=1,\ldots,n.
\end{align*}
The right-hand side $\widetilde{b}$ of \eqref{eq:def_norm_eq} can be calculated component-wise as  
\begin{align*}
\widetilde{b}_j
= 
\langle \widetilde{L}_j, \fun \rangle_{m} 
= &
\dfrac{1}{m} \sum_{i=1}^{m}  w( \widetilde{y}^{\mathcal{S}(i)} )  \widetilde{L}_j( \widetilde{y}^{\mathcal{S}(i)} ) 
 \fun(  \widetilde{y}^{\mathcal{S}(i)}  )
=
\dfrac{
\sum_{\ell=1}^{\widetilde{m}} \sum_{k=1}^n Q^2_{\ell k}
}{ m
\sqrt{\widetilde{m}}
}
 \sum_{i=1}^{m}
\dfrac{
Q_{\mathcal{S}(i),j}
\fun(  \widetilde{y}^{\mathcal{S}(i)})
}{ 
 \sum_{k=1}^n Q^2_{\mathcal{S}(i),k}
}
, \quad j=1,\ldots,n.  
\end{align*}

It is worth to mention that the random samples $y^1,\ldots,y^m$ in 
Theorem~\ref{main_theo} are drawn from $\widetilde{\sigma}_n$ \emph{with replacement}. 
This preserves independence, which is needed in the proof of Lemma~\ref{bernstein} when using Bernstein inequality.
As an alternative, one can draw $y^1,\ldots,y^m$ again from $\widetilde{\sigma}_n$ but \emph{without replacement}. 
The corresponding function $\mathcal{S}$ is injective, and this avoids multiple occurrences of the same row in the matrix $D$.
However the generated $y^1,\ldots,y^m$ are not independent anymore, and one cannot invoke Theorem~\ref{theobern}. 
Nevertheless, such an approach is interesting because random samples generated without replacement can better concentrate around their mean than those generated with replacement.   

\section{Numerical examples with polynomial spaces}
\label{sec:numerics}
In this section the weighted least-squares estimator $\fun_T$ of $\fun$ on $V_n$ is computed by Algorithm 2, as described in Section~\ref{sec:numerical_section}. 
The functions $\auxbas_1,\ldots,\auxbas_n$ are chosen as the tensorized monomial basis supported on the given polynomial space. 
When reporting the numerical results, we mainly focus on the stability of the estimator and on its approximation error. 
The stability is quantified by the condition number $\kappa(\widetilde{G})$, and from item I) of Theorem~\ref{main_theo}, $\| \widetilde{G} - I \| \leq \delta+\ortopar$ implies $\kappa(\widetilde{G})< (1+\widehat{\delta}+\ortopar)/(1-\widehat{\delta}-\ortopar)$.  
In all the numerical tests in this section, the $\widetilde{L}_1,\ldots,\widetilde{L}_n$ constructed by Householder QR factorisation are always $\ortopar$-orthonormal with values of $\ortopar$ less than $10^{-12}$. 

We now describe the numerical estimation of the error $\mathbb{E}(\|\fun - \fun_T \|)$ in Theorem~\ref{main_theo}. 
Denote with $\setcross \subset \dom $ a set of $m_{CV}$ iid random samples uniformly distributed on $\dom$, chosen once and for all.   
For any draw of $y^1,\ldots,y^m,\widetilde{y}^1,\ldots,\widetilde{y}^{\widetilde{m}} \in \dom$ the approximation error is estimated as
\begin{align}
\|\fun - \fun_T \| 
&\approx 
\|\fun - \fun_T \|_{CV}:= \sqrt{ \dfrac{1}{m_{CV}} \sum_{  y \in \setcross  } |\fun(y) - \fun_T(y) |^2 }.
\label{eq:one_run_mc}
\end{align}
The error in expectation is then estimated as a Monte Carlo average by
$$
\mathbb{E}(\|\fun - \fun_T \|) \approx \mathbb{E}_{MC}^r(\|\fun - \fun_T \|_{CV}),
$$ 
with the average $\mathbb{E}_{MC}^r$ being over $r$ independent draws of the random samples $y^1,\ldots,y^m,\widetilde{y}^1,\ldots,\widetilde{y}^{\widetilde{m}}$ from their respective distributions.  
In the following numerical tests we choose $m_{CV}=10^5$ and $r=100$. 

As illustrative examples in dimension $d=2$, we choose $\dom$ as a Swiss cheese set, \emph{i.e.}~a compact set with holes, or the Mandelbrot set, or the annular set. 
With all the aforementioned domains $\dom$, upper bounds for $K_n(\dom)$ are not known.
For the choice of $\widetilde{m}$, we define a parameter $\costi=\costi(n,\dom)$ depending on $\dom$ and $n$, and then take $\widetilde{m} = \lceil \costi n \ln n \rceil$. 
In all the numerical tests, choosing $m=\lceil 4 n \ln n \rceil$ and any $\costi \geq 1$ largely suffices to maintain the condition number safely bounded as $\kappa(\widetilde{G})\leq10$.
As discussed in Remark~\ref{stab_accu}, the choice of $\costi$ is important for the accuracy of $\fun_T$.
Unless otherwise specified, we empirically choose $\costi=200$. 

\subsection{Example with a smooth function on a domain with holes}
Define $\dom:= H \setminus \{ E_1 \cup E_2\} $, where $H:=\textrm{Conv}(S)$ is the convex hull of the point set 
$$
S:=\left\{ (-0.4,0.2)^\top, (-0.7,-0.7)^\top, (0.5,-0.3)^\top,(0.8,0.7)^\top, (0,0.7)^\top, (0,-0.6)^\top   \right\} \subset \alldom,
$$ 
$E_1$ is a standard ellipse centered in $(-0.2,-0.3)^\top$ with semiaxes of length $0.15$ and $0.15/\sqrt2$, and $E_2$ is a standard ellipse centered in $(0.2,0.2)^\top$ with semiaxes of length $0.2$ and $0.2/\sqrt2$. 
The geometry of $\dom$ is shown in Figure~\ref{fig:swiss_details}. 
We consider the function 
\begin{equation}
\label{fun_cheese}
\fun(y)= (1+ 0.2 \, y_1 + 0.1 \, y_2)^{-1}, \quad y=(y_1,y_2)^\top \in \dom \subset \mathbb{R}^2. 
\end{equation}

The space $V_n$ is chosen as the polynomial space supported on the index set $\Lambda=\Lambda_{TD}^{d,k}:=\{ \nu \in \mathbb{N}_0^d \, : \, \| \nu \|_{\ell^1} \leq k\}$, a.k.a.~the total degree polynomial space of order $k$, whose dimension equals $n=\dim(V_n)=\#(\Lambda_{TD}^{d,k})=\binom{d+k}{k}$. Figure~\ref{fig:swiss_error} shows the error $\mathbb{E}_{MC}^r(\|\fun - \fun_T \|_{CV})$ and condition number $\kappa(\widetilde{G})$ when $m=\lceil n\ln n\rceil$ or $m=\lceil 4n\ln n \rceil$, 
and $\widetilde{m} = \lceil 200 n \ln n \rceil$. The error decreases exponentially w.r.t.~$k$,  
and $\widetilde{G}$ remains well-conditioned even when choosing $m=\lceil n\ln n\rceil$. 
Figure~\ref{fig:swiss_details} shows one shot of the random samples $y^1,\ldots,y^m,\widetilde{y}^1,\ldots,\widetilde{y}^{\widetilde{m}}$ (left figure) and two realizations of the pointwise error $y\mapsto |\fun(y) - \fun_T(y)|$ for $y \in \dom$ (center and right figures).
\begin{figure}[htbp]   
\includegraphics[height=6.5 cm ,clip]{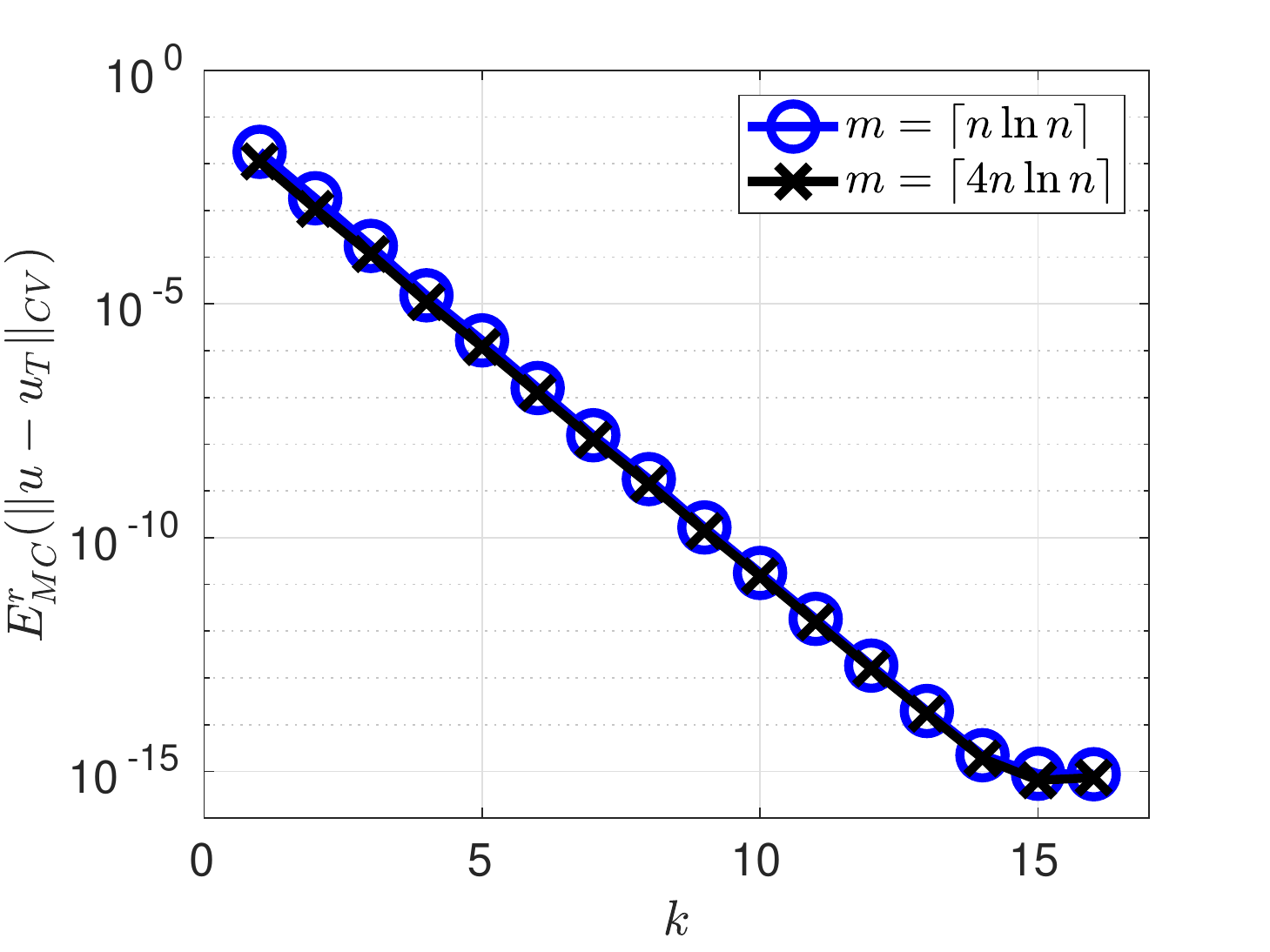}
\includegraphics[height=6.5 cm ,clip]{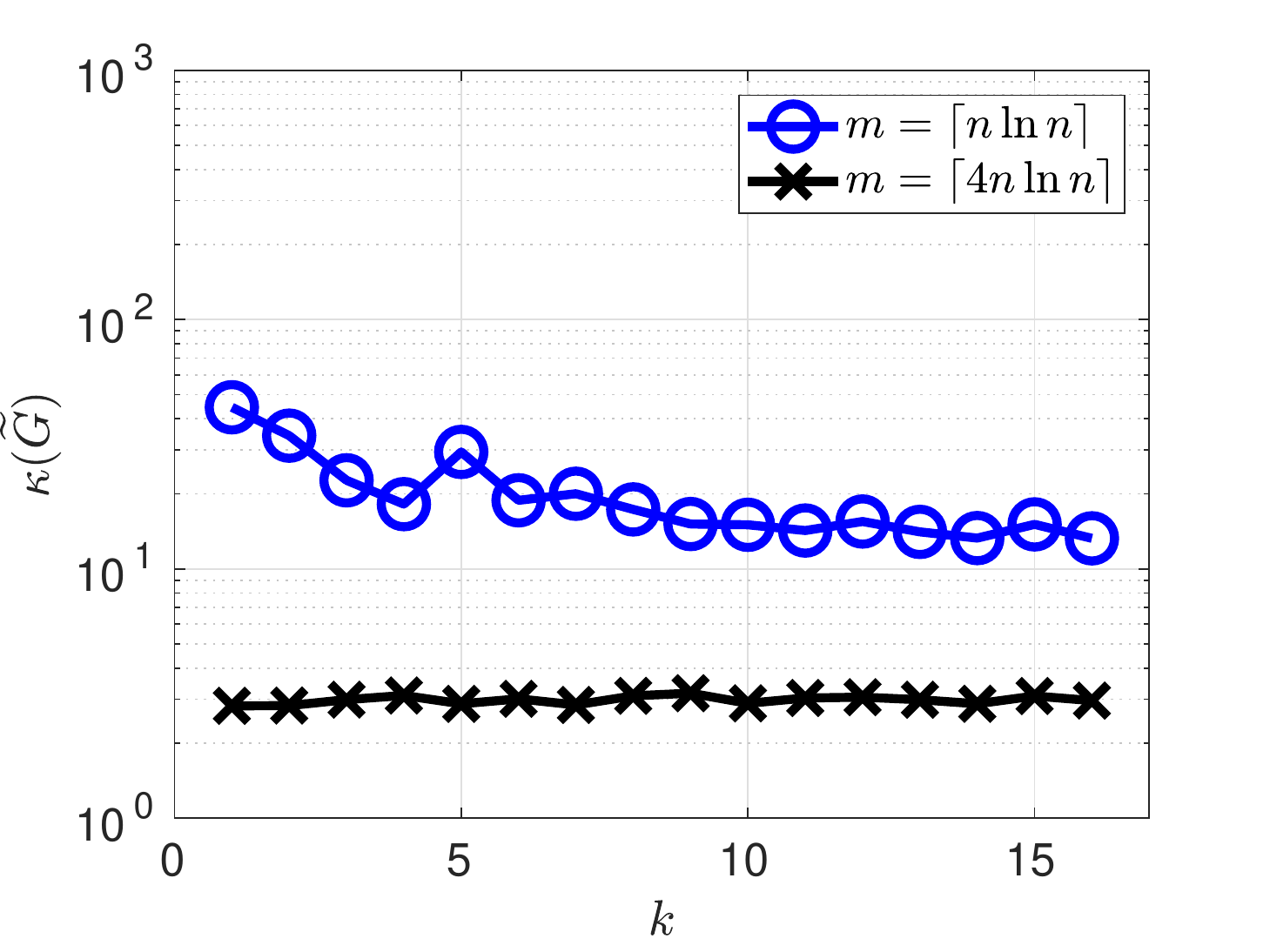}
\caption{\label{fig:swiss_error} 
Left: error $\mathbb{E}_{MC}^r(\|\fun - \fun_T \|_{CV})$ for the function \eqref{fun_cheese}. 
Right: condition number $\kappa(\widetilde{G})$.}
\end{figure}
\begin{figure}[htbp]   
\hspace{-1.0cm}
\includegraphics[height=5.1 cm ,clip]{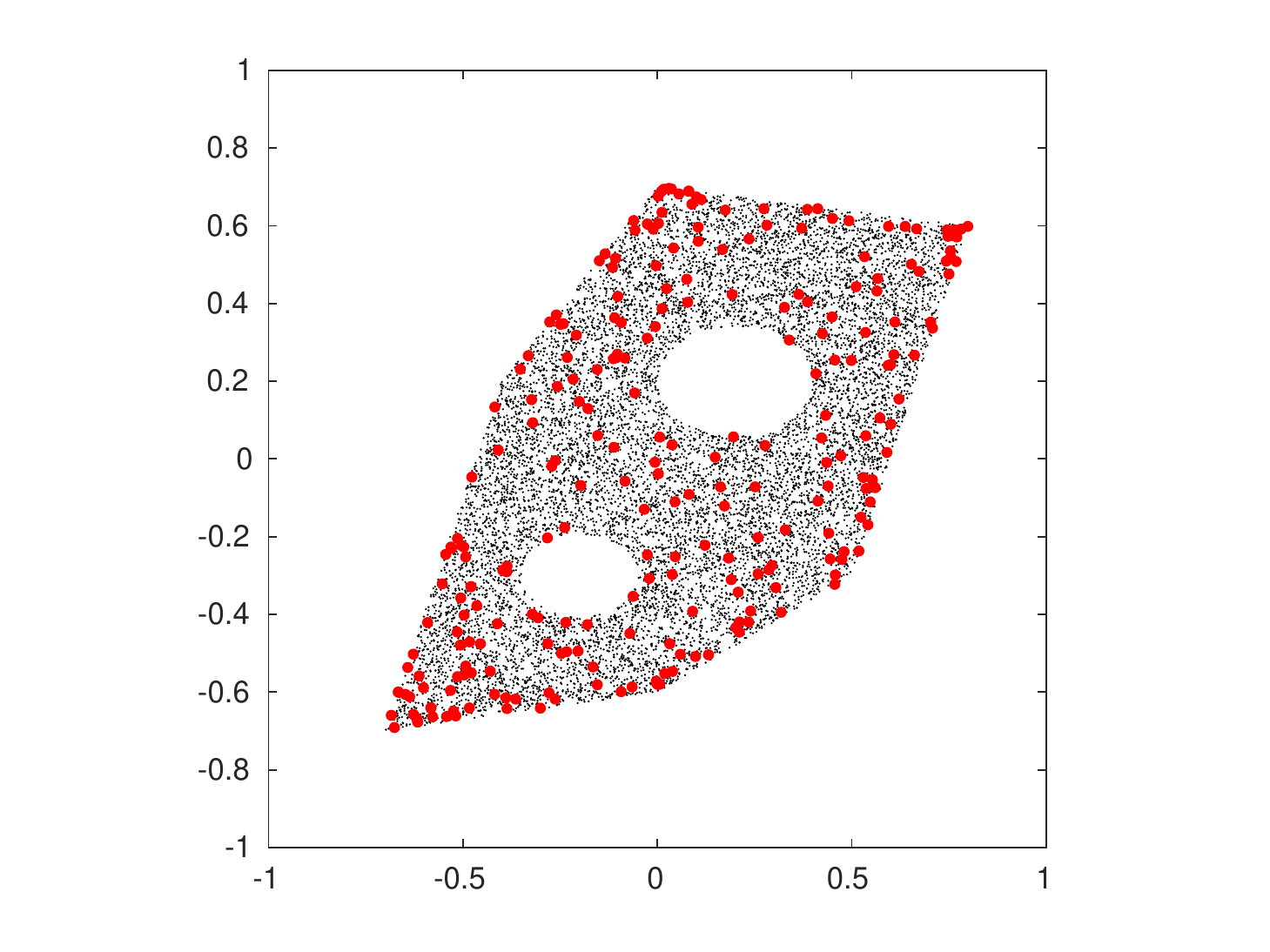}
\hspace{-1.3cm}
\includegraphics[height=5.1 cm ,clip]{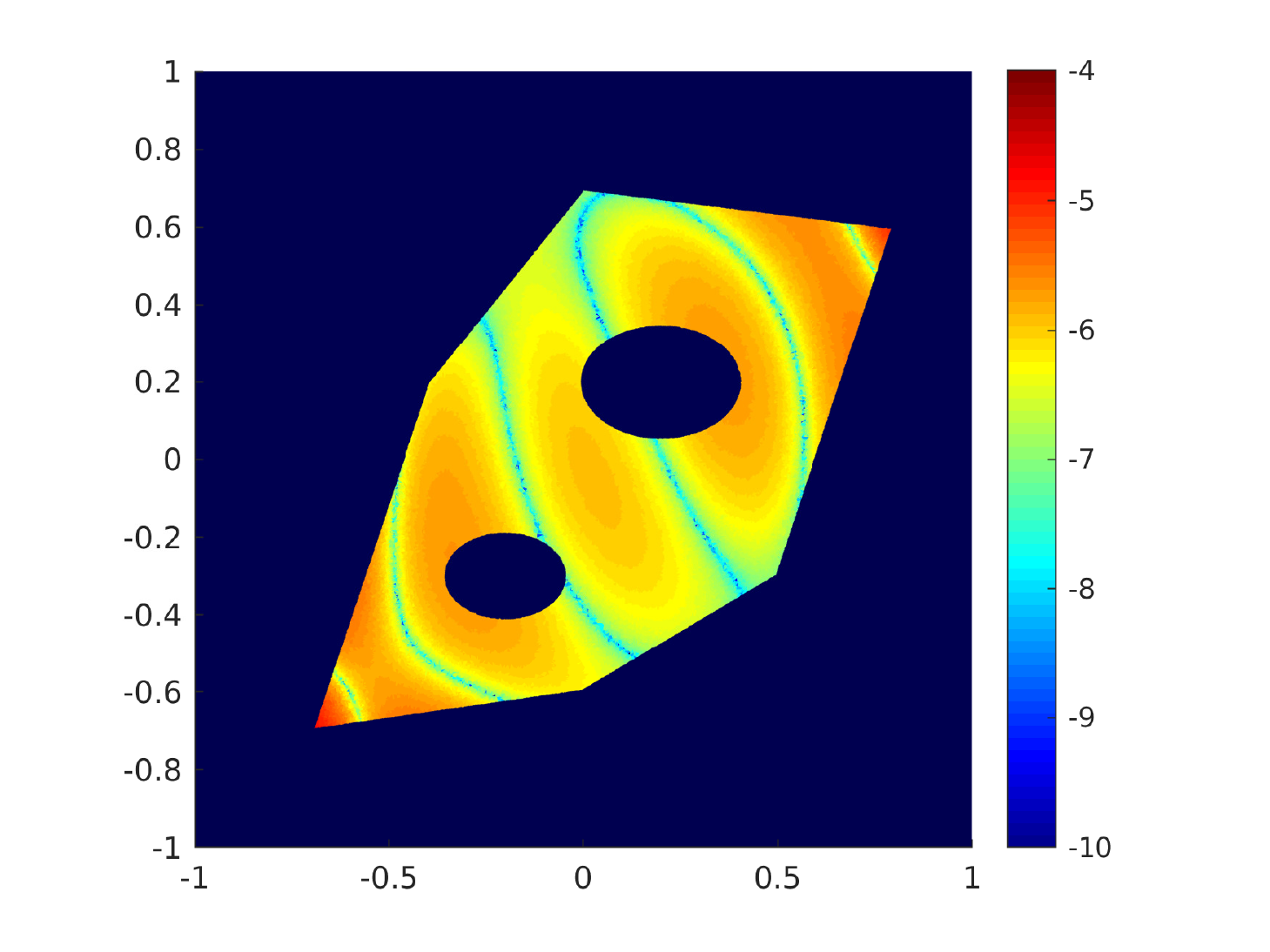}
\hspace{-1.0cm}
\includegraphics[height=5.1 cm ,clip]{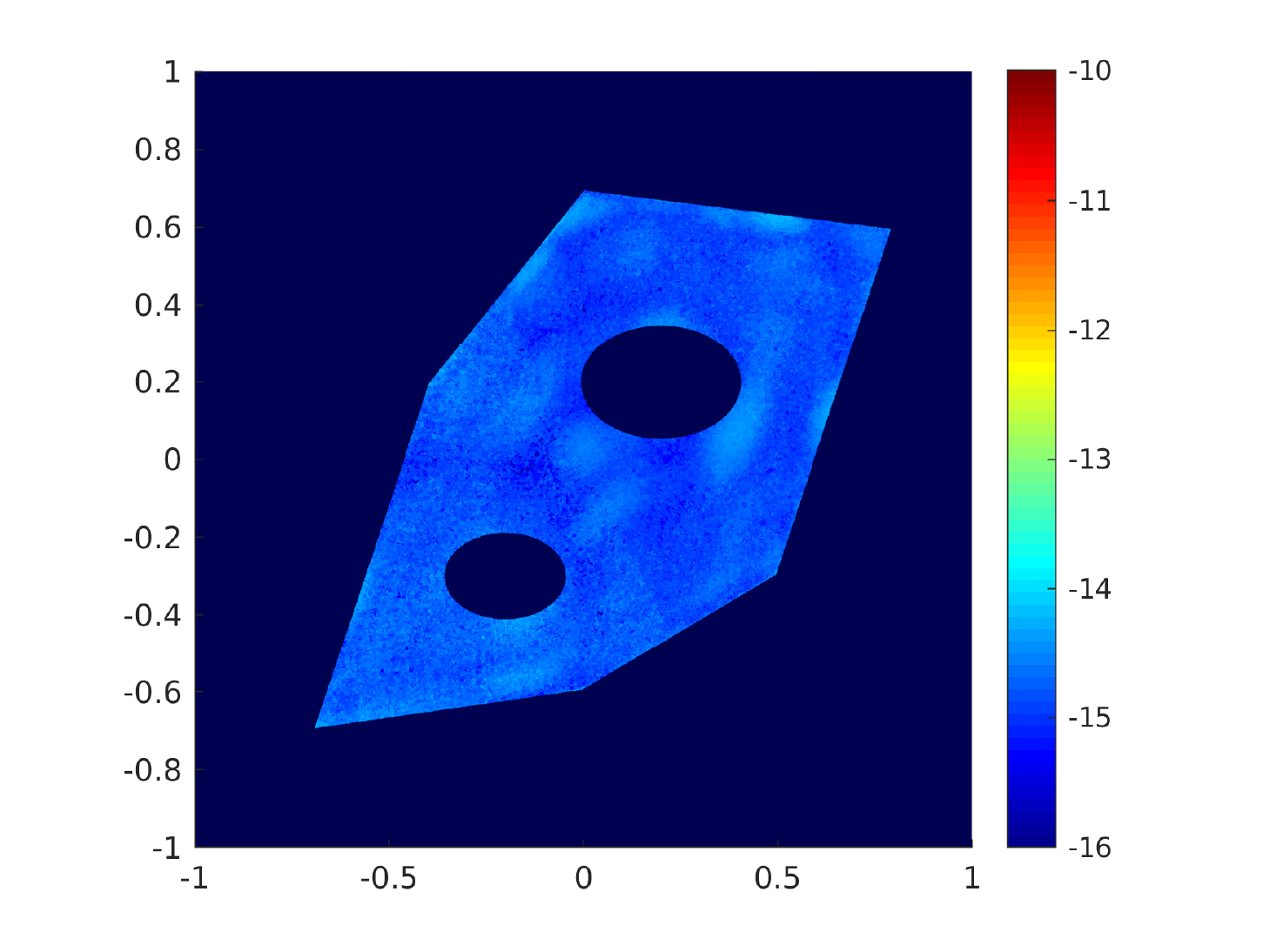}
\caption{\label{fig:swiss_details} 
Left: one realization of $y^1,\ldots,y^m$ (red dots) and $\widetilde{y}^1,\ldots,\widetilde{y}^{\widetilde{m}}$ (black dots) with $k=5$. 
Center: one realization of the error $y\mapsto \log_{10}|\fun(y) - \fun_T(y)|$ in $\dom$, with $k=5$. 
Right: one realization of the error $y\mapsto \log_{10}|\fun(y) - \fun_T(y)|$ in $\dom$, with $k=15$. 
All realizations are taken from the simulation in Figure~\ref{fig:swiss_error}, with $m=\lceil 4n\ln n \rceil$ and $\widetilde{m}=\lceil 200 n\ln n \rceil$. The dark blue region corresponds to $\alldom\setminus\dom$. 
}
\end{figure}
\begin{figure}[htbp]   
\includegraphics[height=6.5 cm ,clip]{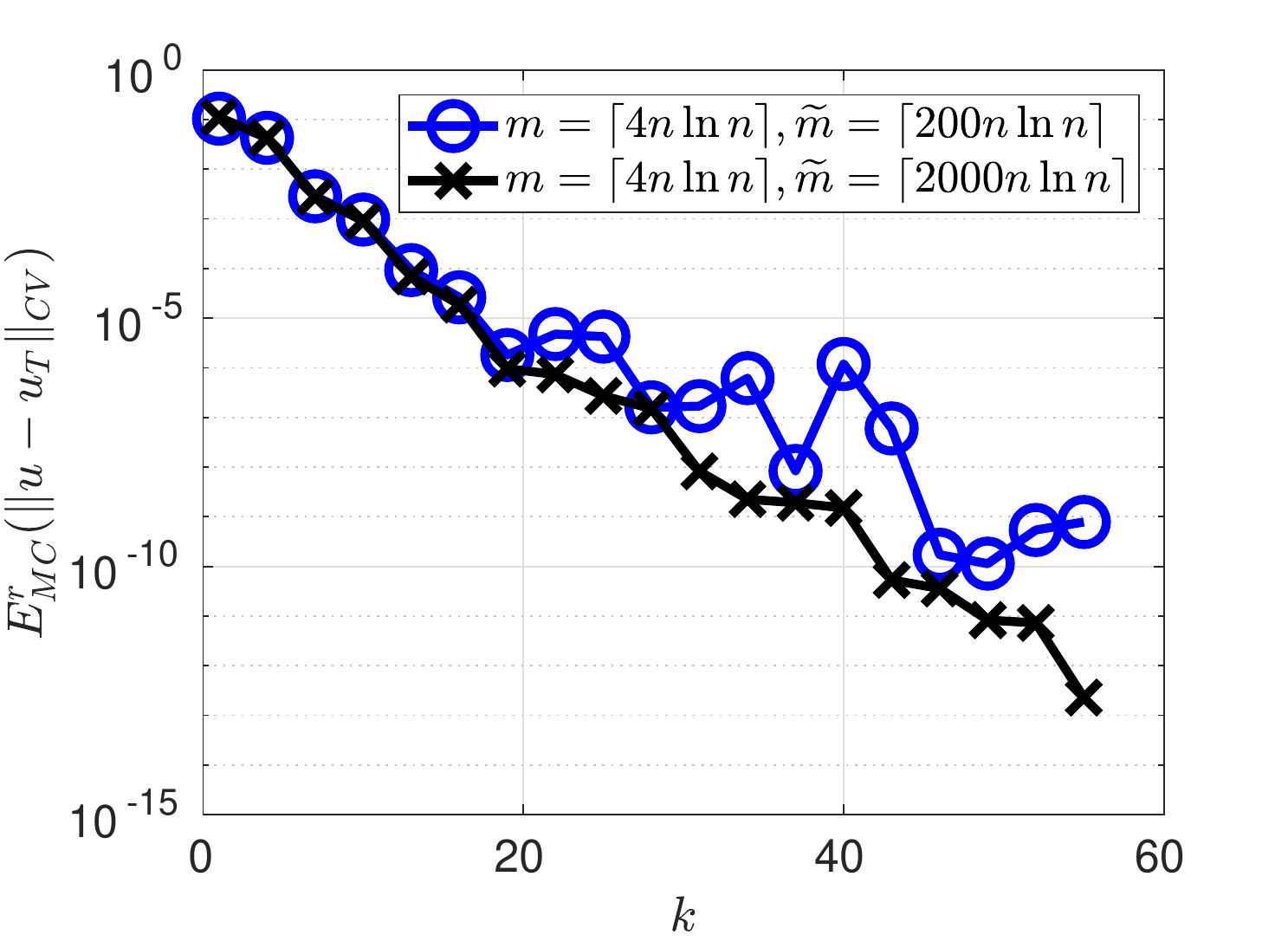} %error_mandel7
\includegraphics[height=6.5 cm ,clip]{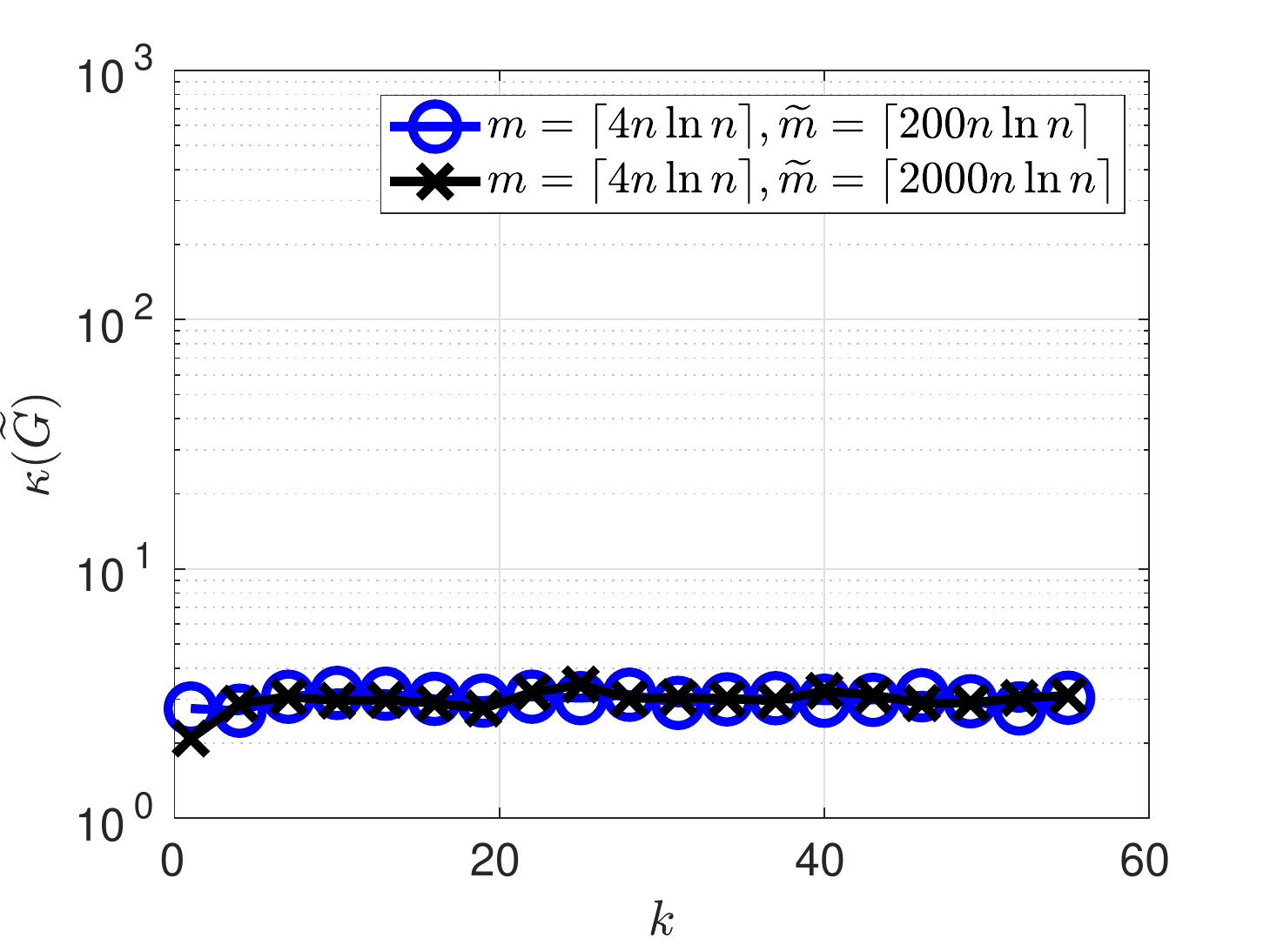} %cond_mandel7
\caption{\label{fig:mandel} 
Left: error $\mathbb{E}_{MC}^r(\|\fun - \fun_T \|_{CV})$ for the function \eqref{eq:fun_cossin}. 
Right: condition number $\kappa(\widetilde{G})$.  
}
\end{figure}

\subsection{Comparison with examples from the literature} 
The following two examples are taken from \cite{AH}. Consider the function 
\begin{equation}
\fun(y)= \cos(2y_1) \sin(y_2), \quad y=(y_1,y_2)^\top \in \dom \subset \mathbb{R}^2,
\label{eq:fun_cossin}
\end{equation}
when $\dom$ is the Mandelbrot set displayed in Figure~\ref{fig:mandel_log}-right, or the function 
\begin{equation}
\fun(y)=  \sum_{j=1}^2 \left| y_j \right|^{-1/2}, \quad y=(y_1,y_2)^\top \in \dom \subset \mathbb{R}^2,
\label{eq:fun_sing}
\end{equation}
when $\dom=\{ y \in \alldom \, : \, \frac14  \leq \| y \|_{\ell^2} \leq 1 \}$ is the annular set displayed in Figure~\ref{fig:annulus}-right.  
With both functions, the space $V_n$ is chosen as the polynomial space supported on the hyperbolic cross index set of order $k$ defined as $\Lambda=\Lambda_{HC}^{d,k}:=\{ \nu \in \mathbb{N}_0^d \, : \, \prod_{j=1}^d (\nu_j+1)  \leq k+1\}$.

Figure~\ref{fig:mandel} shows the error and condition number for the example with the function \eqref{eq:fun_cossin} on the Mandelbrot set. 
When choosing $m=\lceil 4n\ln n \rceil$ and $\widetilde{m} = \lceil 200 n \ln n \rceil$, the error in Figure~\ref{fig:mandel} decreases exponentially w.r.t~$k$ up to $k=19$, and then exhibits an increasing variability and suboptimal convergence rate for $k> 19$. This is due to an underestimation of $K_n(\dom)$ when choosing $\costi=200$ for the given domain. Taking a larger $\costi=2000$ restores the exponential convergence of the error, at least for $k$ up to $57$.   
In Figure~\ref{fig:mandel_log}-left we report the same results as Figure~\ref{fig:mandel}-left but with $n$ in abscissa. 
Figure~\ref{fig:mandel_log}-right shows one realization of the pointwise error $y\mapsto \log_{10}|\fun(y)-\fun_T(y)|$ on $\dom$, obtained from the simulation in Figure~\ref{fig:mandel_log}-left when $n=176$, and the maximum error over $\dom$ is of the order $10^{-8}$.

\begin{figure}[htbp]   
%\hspace{-1cm}
\includegraphics[height=6.5 cm ,clip]{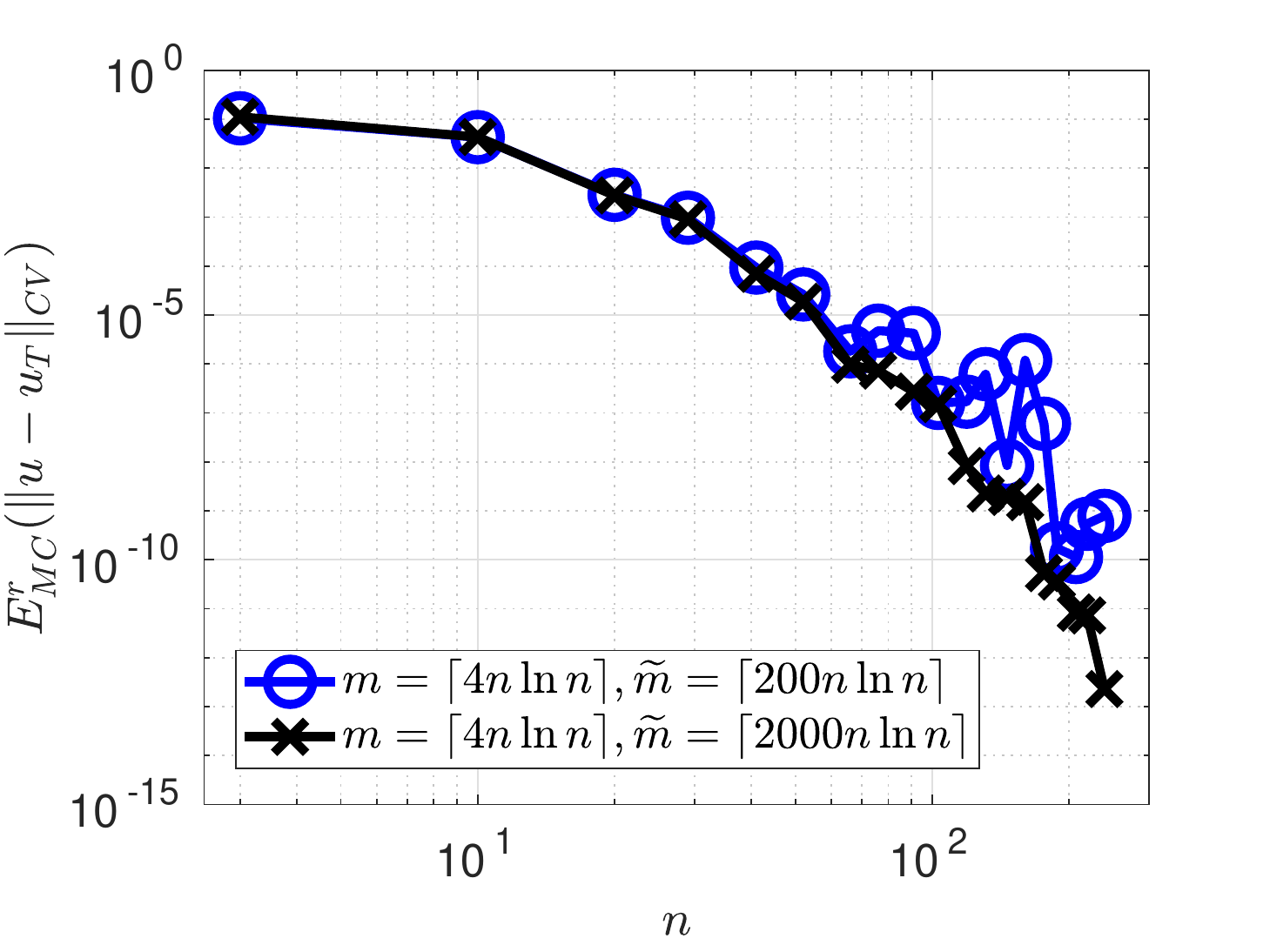} %error_mandel7
%\hspace{-0.6cm}
\includegraphics[height=6.5 cm ,clip]{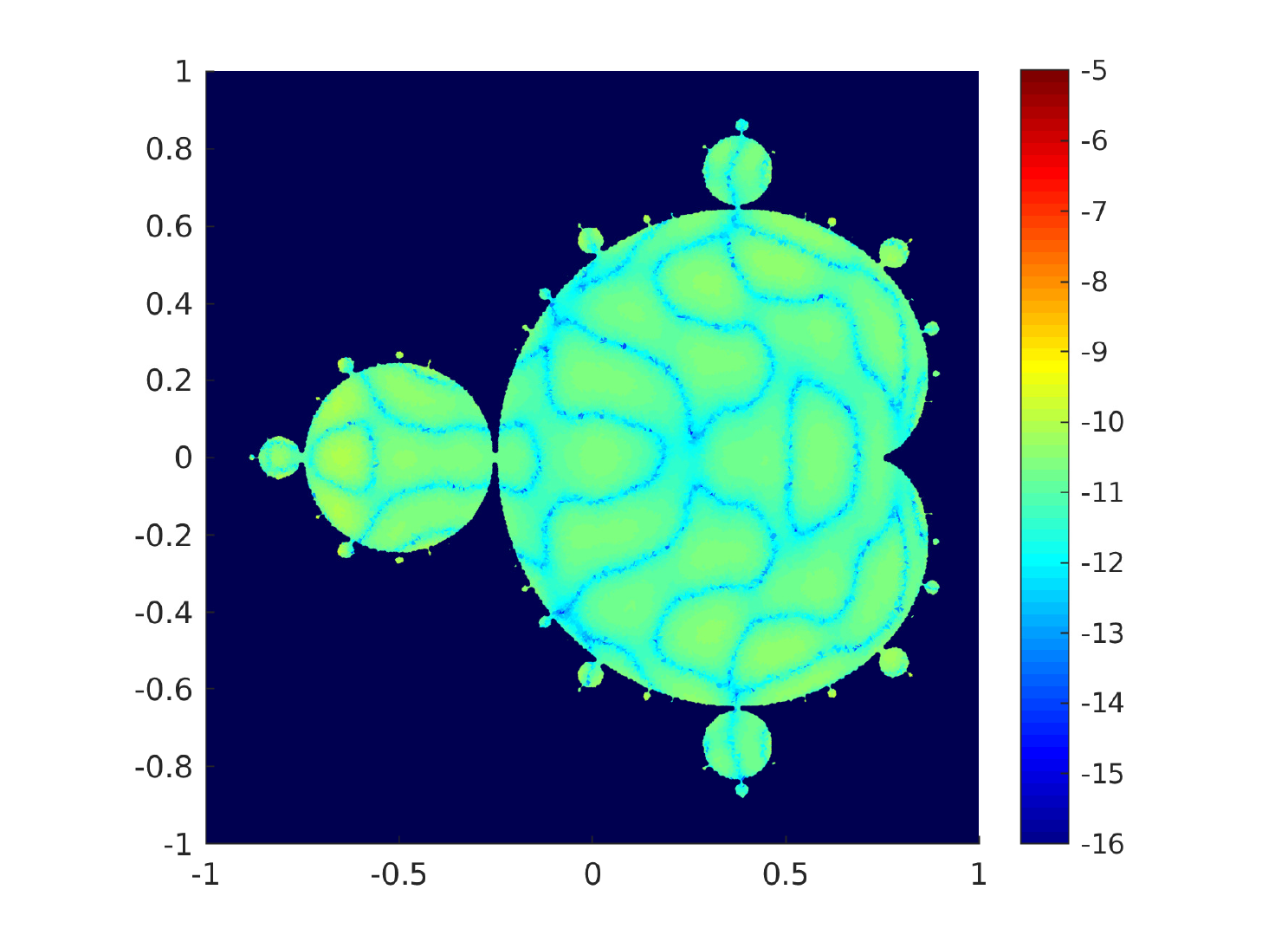} %mandel_shoot
%\hspace{-0.65cm}
%\includegraphics[height=4.5 cm ,clip]{./mandel_shoot}
\caption{\label{fig:mandel_log} 
Left: same as Figure~\ref{fig:mandel}-left but with $n$ in abscissa.
Right: one realization of the error $y\mapsto \log_{10} |\fun(y) - \fun_T(y)|$ when $\dom$ is the Mandelbrot set, $\fun$ is the function \eqref{eq:fun_cossin} and $n=176$. 
The dark blue region corresponds to $\alldom\setminus\dom$. 
}
\end{figure}

Figure~\ref{fig:annulus}-left shows the error for the example with the nonsmooth function \eqref{eq:fun_sing} on the annular set, with $m=\lceil 4n\ln n \rceil$ and $\widetilde{m} = \lceil 200 n \ln n \rceil$.   
The corresponding results for the condition number are the same as the blue data in Figure~\ref{fig:mandel}-right, since both examples use the same polynomial space.
The error in Figure~\ref{fig:annulus}-left decreases algebraically w.r.t.~$n$.
One realization of the error is shown in Figure~\ref{fig:annulus}-right: the maximum error over $\dom$ equals $0.45$ and is attained along the discontinuities of $\fun$ on the Cartesian axes. 
The error in Figure~\ref{fig:annulus}-left does not manifest any instability, in contrast to the error in \cite[Figure 5]{AH} obtained for the same testcase but with the different method there proposed. 
In general, the error of the estimator $\fun_T$ is not affected by the distance of $\dom$ from the boundary of $\alldom$, even when $\dom$ touches $\partial\alldom$, like in this example.

\section{Conclusions and perspectives}
\label{sec:conclu}
We have developed and analysed numerical algorithms for the construction of weighted least-squares estimators in any $n$-dimensional space $V_n \subset L^2(\dom,\mu)$ defined 
on a general bounded domain $\dom$, when an explicit $L^2(\dom,\mu)$-orthonormal basis is not available.   
The estimator is stable with high probability, quasi-optimally converging in expectation, and uses a number of function evaluations $m$ of the order $n \ln n$. The calculation of the estimator requires the numerical construction of a discretely orthonormal surrogate basis $\widetilde{L}_1,\ldots,\widetilde{L}_n$ of $V_n$, at a computational cost that depends on the Christoffel function of $\dom$ and $V_n$.

The results in 
Theorem~\ref{main_theo}
apply 
to any general orthonormalisation algorithm that 
can construct an $\ortopar$-orthonormal surrogate basis $\widetilde{L}_1,\ldots,\widetilde{L}_n$ for $V_n$ with some probability $1-\conf$.
When using the Householder QR factorisation and $V_n$ is a multivariate polynomial space,  
$\ortopar$ is provably tiny 
for $n$ up to thousands, and $\conf=0$. 

An important point in the numerical construction of the surrogate basis is the robustness to ill-conditioning arising from the lack of knowledge of an $L^2(\dom,\mu)$-orthonormal basis. 
The algorithms proposed in this paper are extremely robust to such an ill-conditioning, and compute weighted least-squares estimators that are numerically stable and accurate with all the functions and domains tested.  

As a final remark, 
the whole analysis in this paper immediately applies to the adaptive setting,  
using nested sequences of approximation spaces $V^1 \subset \cdots \subset V^k \subset L^2(\dom,\mu)$ 
rather than a single a priori given approximation space.

\begin{figure}[htbp]   
%\hspace{-1cm}
\includegraphics[height=6.5 cm ,clip]{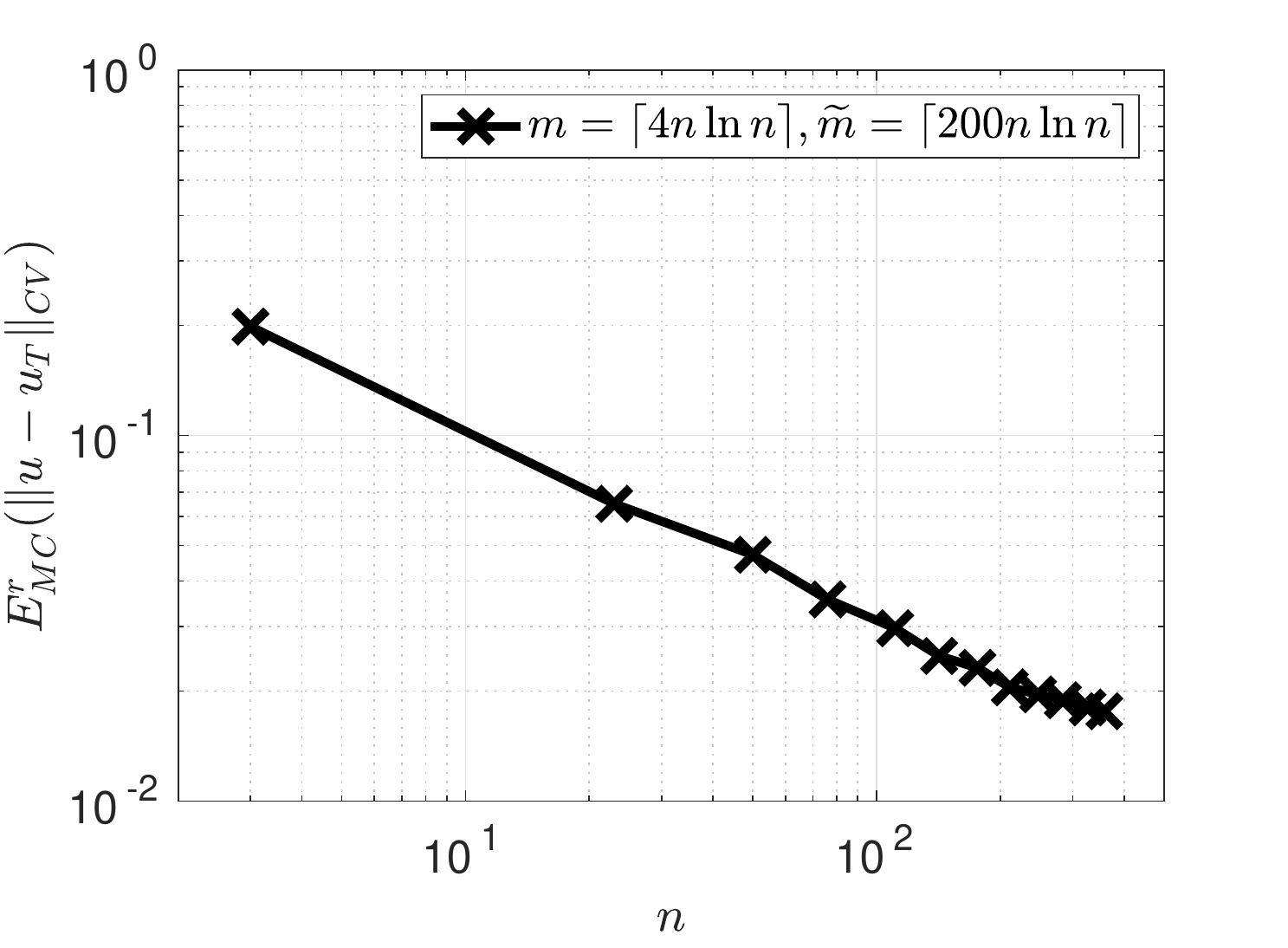}
%\hspace{-0.5cm}
\includegraphics[height=6.5 cm ,clip]{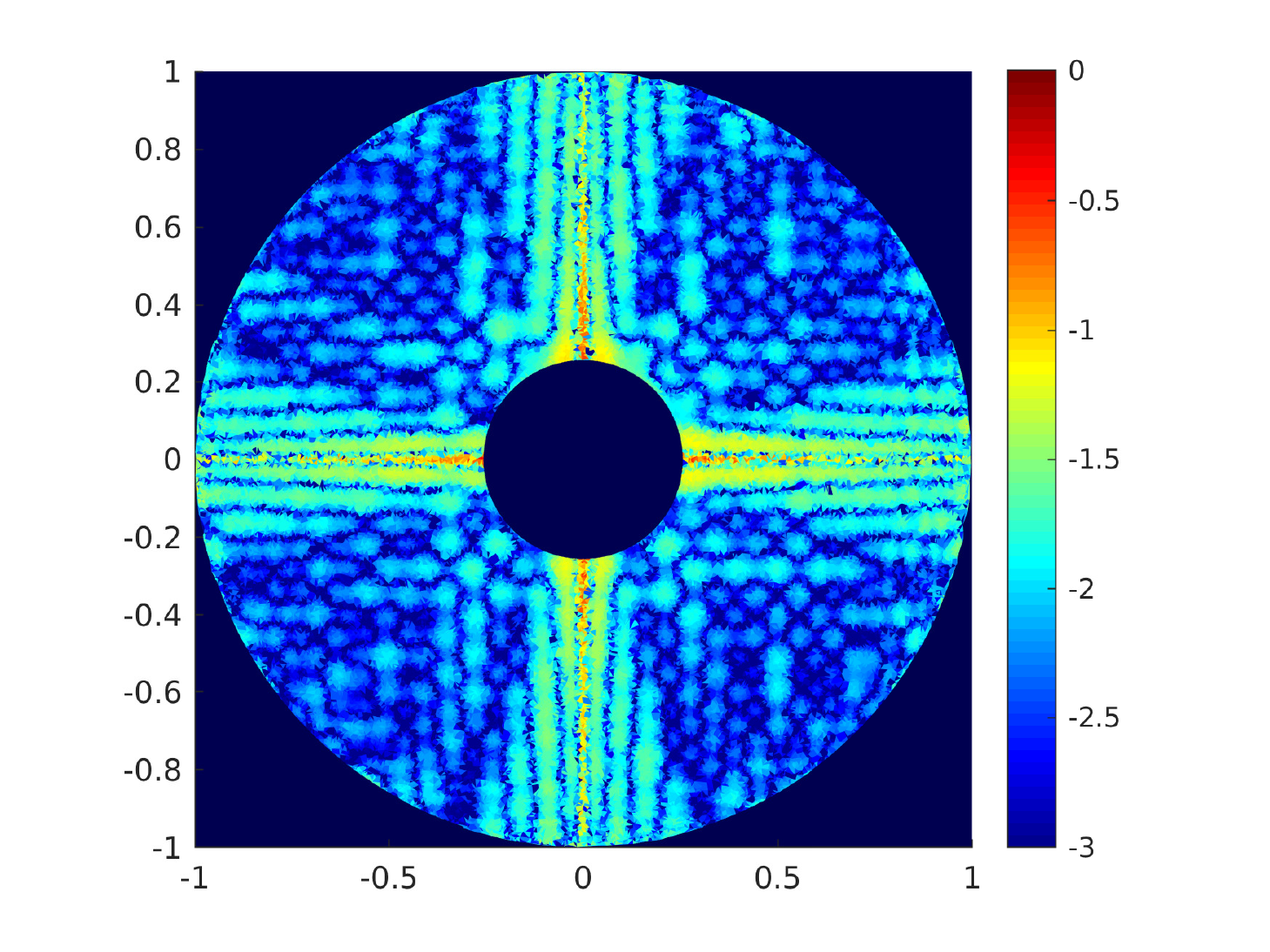}
%\includegraphics[height=6.5 cm ,clip]{./cond_annulus2}
%\hspace{-0.65cm}
%\includegraphics[height=4.5 cm ,clip]{./annulus2_shoot}
\caption{\label{fig:annulus} 
Left: error $\mathbb{E}_{MC}^r(\|\fun - \fun_T \|_{CV})$ for the function \eqref{eq:fun_sing}.
Right: one realization of the error $y\mapsto \log_{10} |\fun(y) - \fun_T(y)|$ when $\dom$ is the annular set, $\fun$ is the function \eqref{eq:fun_sing} and $n=358$. 
The dark blue region corresponds to $\alldom\setminus\dom$. 
}
\end{figure}

\bibliographystyle{plain}

\end{document}